\documentclass[11pt]{amsart}

\usepackage[english]{babel}
\usepackage[utf8]{inputenc}
\usepackage[T1]{fontenc}
\usepackage{lmodern}
\usepackage{microtype}
\usepackage{indentfirst}
\usepackage{geometry}
\usepackage{enumitem}
\usepackage{amsmath, amssymb, amsthm}
\usepackage{eucal, mathrsfs, bm, bbm}
\usepackage{stmaryrd,mathtools,braket}
\usepackage[all]{xy}
\usepackage{xspace}
\usepackage[foot]{amsaddr}
\usepackage{hyperref}
\usepackage[capitalise,noabbrev]{cleveref}

\geometry{a4paper,hmargin=3cm,vmargin=3cm}

\hypersetup{
urlbordercolor=1 1 1,
linkbordercolor=0 0 1
}

\theoremstyle{plain}
\newtheorem{theor}{Theorem}[section]
\newtheorem{prop}[theor]{Proposition}
\newtheorem{lem}[theor]{Lemma}
\newtheorem{corol}[theor]{Corollary}
\newtheorem{defin}[theor]{Definition}
\theoremstyle{definition}
\newtheorem{rem}[theor]{Remark}

\crefname{defin}{Definition}{Definitions}
\crefname{rem}{Remark}{Remarks}
\crefname{theor}{Theorem}{Theorems}
\crefname{prop}{Proposition}{Propositions}
\crefname{lem}{Lemma}{Lemmas}
\crefname{corol}{Corollary}{Corollaries}
\crefname{claim}{Claim}{Claims}
\crefname{exmp}{Example}{Examples}
\newcommand{\abscatm}[1]{\mathbb{#1}}		
\newcommand{\catcm}{\abscatm{C}}		\newcommand{\catct}{$\catcm$\xspace}
\newcommand{\catpm}{\abscatm{P}}		\newcommand{\catpt}{$\catpm$\xspace}
\newcommand{\catem}{\abscatm{E}}		\newcommand{\catet}{$\catem$\xspace}
\newcommand{\catmm}{\abscatm{M}}		\newcommand{\catmt}{$\catmm$\xspace}
\newcommand{\conccatm}[1]{\mathsf{#1}}		
\newcommand{\cattopm}{\conccatm{Top}}		\newcommand{\cattopt}{$\cattopm$\xspace}
		
\newcommand{\subm}{\textup{Sub}}	
\newcommand{\indxup}[2]{#1_{\textup{#2}}}
\newcommand{\fnctidxup}[1]{\Gamma_{\!\textup{#1}}}
\newcommand{\excomm}[1]{\indxup{#1}{ex}}	
\newcommand{\excomcm}{\excomm{\catcm}}		\newcommand{\excomct}{$\excomcm$\xspace}
\newcommand{\exfnctm}{\fnctidxup{ex}}

\newcommand{\posrefm}[1]{\indxup{#1}{po}}

\newcommand{\posrefpm}[1]{\posrefm{(\catpm/#1)}}
\DeclareMathOperator{\hocatm}{Ho}
\newcommand{\hocatcm}{\hocatm \catcm}	\newcommand{\hocatct}{$\hocatcm$\xspace}
\newcommand{\hocatmm}{\hocatm \catmm}	\newcommand{\hocatmt}{$\hocatmm$\xspace}
\newcommand{\fibcatm}[1]{{#1}_{\textup{f}}}
\newcommand{\fibcatmm}{\fibcatm{\catmm}}	\newcommand{\fibcatmt}{$\fibcatmm$\xspace}
\newcommand{\toop}{\leftarrow}
\newdir{|>}{!/4.5pt/@{|}*:(1,-.2)@^{>}*:(1,+.2)@_{>}*!/-3.5pt/@{ }}
\newdir{ (}{!/ 2.5pt/@{ }*!/:(-.1,-1) 5.7pt/@{(}}
\newdir{ >}{!/2pt/@{ }*@{>}}
\newcommand{\xyar}[5]{\ar@{#1}|<{#2}#3|>{\SelectTips{#4}{}\object@{#5}}}
\newcommand{\xys}[2][]{\xyar{-}{#1}{#2}{cm}{>}}
\newcommand{\xydot}[2][]{\xyar{.}{#1}{#2}{cm}{>}}

\newcommand{\mono}{\hookrightarrow}

\newcommand{\xymonor}[1]{\xyar{^{(}-}{}{#1}{cm}{>}@<-1pt>}

\newcommand{\repi}{\, \mbox{\large$\rightarrowtriangle$}\, }

\newcommand{\xyrepi}[2][]{\xyar{-}{#1}{#2}{xy}{|>}}
\newcommand{\psrel}{\rightrightarrows}

\newcommand{\xymapsto}[1]{\xyar{-}{\object@{|}}{#1}{cm}{>}}
\newcommand{\fibr}{\twoheadrightarrow}

\newcommand{\xyfibr}[2][]{\xyar{-}{#1}{#2}{cm}{>>}}
\newcommand{\cofibr}{\rightarrowtail}

\newcommand{\xycofibr}[1]{\xyar{-}{\SelectTips{cm}{}\object@{ >}}{#1}{cm}{>}}
\newcommand{\wequivand}[1]{\overset{\text{\tiny$ \sim $}}{#1}}
\newcommand{\wequiv}{\wequivand{\rightarrow}}

\newcommand{\xywequiv}{\object@{~}}
\newcommand{\xycenterm}[2][=2em]{\vcenter{\hbox{\xymatrix@#1{#2}}}}
\newcommand{\xycentermqed}[2][=2em]{\begin{gathered}[b]\xycenterm[#1]{#2}\\[-\dp\strutbox]\end{gathered}\qedhere}
\newcommand{\id}{\textup{id}}
\newcommand{\pbkun}[1]{\langle #1 \rangle}
\newcommand{\htpc}{\simeq}
\newcommand{\limpd}{\Rightarrow}
\newcommand{\liff}{\Leftrightarrow}
\newcommand{\allw}{\forall^{\textup{w}}}
\newcommand{\typing}{:}
\newcommand{\ie}{\textit{i.e.}\xspace}

\renewcommand{\iff}{if and only if\xspace}
\newcommand{\twothree}{2-out-of-3\xspace}

\newcommand{\wrt}{with respect to\xspace}
\newcommand{\mltt}{Martin-L\"of type theory\xspace}

\makeatletter
\@namedef{subjclassname@2020}{\textup{2020} Mathematics Subject Classification}
\makeatother


\title{The Fullness Axiom and exact completion of homotopy categories}
\author{Jacopo Emmenegger}
\email{\href{mailto:op.emmen@gmail.com}{op.emmen@gmail.com}}
\address{School of Computer Science, University of Birmingham, Birmingham B15 2TT, UK.}
\subjclass[2020]{18D15; 18A35; 18E08; 55U35; 18B15; 18A15; 18F60}
\keywords{Exact completion, homotopy category, fullness axiom, local cartesian closure, weak limits}

\begin{document}

\begin{abstract}
We use a category-theoretic formulation of Aczel's Fullness Axiom from Constructive Set Theory
to derive the local cartesian closure of an exact completion.
As an application, we prove that
such a formulation is valid in the homotopy category of any model category satisfying mild requirements,
thus obtaining in particular the local cartesian closure of the exact completion
of topological spaces and homotopy classes of maps.
Under a type-theoretic reading, these results provide a general motivation
for the local cartesian closure of the category of setoids.
However, results and proofs are formulated solely in the language of categories,
and no knowledge of type theory or constructive set theory is required on the reader's part.
\end{abstract}

\maketitle

\section{Introduction}

In the paper that generalises the exact completion construction to an arbitrary category with
\emph{weak} finite limits, where a universal arrow is not required to be unique,
Carboni and Vitale advocated a deeper study of that construction
applied to homotopy categories~\cite{CarboniVitale1998}.
These categories, indeed, form a large class of natural examples
of categories with weak finite limits,
in the sense that they do not arise as projective covers of finitely complete categories.
A first step in this direction was made by Gran and Vitale in~\cite{GranVitale1998},
where they provide a complete characterisation of those 
exact completions of categories with weak finite limits
(henceforth ex/wlex completions) that produce a pretopos,
and apply this result to show that the exact completion
of the category of topological spaces and homotopy classes of maps
is indeed a pretopos.
However, the problem of determining whether
it is also locally cartesian closed is explicitly left open.

The author has given a complete characterisation
of locally cartesian closed ex/wlex completions in~\cite{Emmenegger2018}.
That characterisation is however not much suited to
the study of the ex/wlex completion of a homotopy category \hocatmt,
when instead a formulation in terms of the original
Quillen model category \catmt would be preferable.
The present paper provides a condition
ensuring the local cartesian closure of
the ex/wlex completion $ \excomm{(\hocatmm)} $
for a large class of model categories.
Somewhat surprisingly, this condition turns out to be
what Carboni and Rosolini named weak local cartesian closure in~\cite{CarboniRosolini2000},
that is, simply existence of weak dependent products.

As we shall prove in the last section,
the homotopy quotient of a weak dependent product in \catmt
is a \emph{dependent full diagram} in $ \hocatmm $.
The latter is a generalisation to arbitrary categories with weak finite limits
of a concept introduced in~\cite{EmmeneggerPalmgren2017}
to prove the local cartesian closure of the exact completion
of a well-pointed category with finite products and weak equalisers.
Anther precursor is the axiom F for a class of small maps
in~\cite{vdBergMoerdijk2008}, which is proved to be stable under ex/reg completions.
We shall comment on the (tight) relation between this axiom
and dependent full diagrams in \cref{rem:axF}.
Indeed, both the universal property of dependent full diagrams
and axiom F are inspired by Aczel's Fullness Axiom from
the constructive set theory CZF~\cite{Aczel1978,AczelRathjen2001}.
The Fullness Axiom is a collection principle
asserting the existence of what Aczel calls \emph{full sets},
that is, sets containing enough total relations (a.k.a.\ multi-valued functions).
This axiom is strictly weaker than the Power Set Axiom
(and it is regarded as a predicative principle)
but strong enough, in particular, to entail the Exponentiation Axiom,
which asserts that functions between two sets form a set.

We prove that also in the general case of an ex/wlex completion,
existence of dependent full diagrams in \catct is enough
to derive the local cartesian closure of \excomct.
That this should be possible follows from the observation, due to Erik Palmgren,
that arrows $ V \to Y $ out of a weak product $ Z \toop V \to X $ may be understood
as families indexed on $ Z $ of multi-valued functions from $ X $ to $ Y $.
A more robust formulation of this observation is in \cref{rem:totrel}.
In addition, dependent full diagrams endow the internal logic with universal quantification and implication
which, in turn, can be used to extract from multi-valued functions only the functional ones (cf.~\cref{lem:wcc}).
At this point it is enough to construct a suitable equivalence relation to obtain an exponential in \excomct.

In order to prove that dependent full diagrams are homotopy quotients of weak dependent products,
we exploit the concepts of \emph{path category} and \emph{weak homotopy $\Pi$-type},
recently introduced by van den Berg and Moerdijk in~\cite{vdBergMoerdijk2018}.
A path category is a slight strengthening of Brown's fibration category.
In particular, the subcategory $ \fibcatmm $ on the fibrant objects
of a model category \catmt is a path category
as soon as all the objects of \catmt are cofibrant.
Weak homotopy $\Pi$-types in a path category \catct are what van den Berg and Moerdijk use
to derive the local cartesian closure of $ \excomm{(\hocatcm)} $,
the homotopy exact completion of \catct.
We show that if \catmt is right proper,
then weak homotopy $\Pi$-types arise as fibrant replacements of weak dependent products,
so that a weak dependent product in \catmt gives rise to a weak homotopy $\Pi$-type in $ \fibcatmm $.
Furthermore, in the same way as pullbacks along fibrations
enjoy the additional universal property of homotopy pullbacks,
also weak homotopy $\Pi$-types enjoy an additional universal property up to homotopy,
which shows that the homotopy quotient
maps weak homotopy $\Pi$-types to dependent full diagrams.

Under a type-theoretic reading, the results in the present paper provide
a general motivation for the local cartesian closure of the category of setoids in \mltt.
Indeed, the category of contexts of \mltt
is a path category~\cite{vdBerg2018a}, see also~\cite{GambinoGarner2008},
and $ \prod $-types endow it with weak homotopy $\Pi$-types.
More generally, we obtain a more elementary proof of the local cartesian closure
of a homotopy exact completion.
It should be noted that,
under the reading of arrows out of a weak product as multi-valued functions,
single-valued functions in a homotopy category $ \hocatcm $
appear as ``homotopy-irrelevant'' arrows.
Indeed, an arrow $ k $ out of a homotopy limit,
say a homotopy pullback of $ f \colon X \to Y $ and $ g \colon Z \to Y $,
induce an arrow in $ \excomm{(\hocatm \catcm)} $
out of the actual pullback of $ f $ and $ g $ in
\iff values of $ k $ only depend on pairs $ (x,z) $
and not on the homotopy witnessing $ f(x) \htpc g(z) $.
The analogy with the role of homotopy-irrelevant fibrations in
the argument for the local cartesian closure of $ \excomm{(\hocatm \catcm)} $
from van den Berg and Moerdijk~\cite{vdBergMoerdijk2018}
may be worth further investigation.
Indeed, in type-theoretic terminology,
these are the proof-irrelevant setoid families
whose importance has been stressed by Palmgren~\cite{Palmgren2012a}.

Furthermore, the observation that dependent full diagrams
naturally arise as homotopy quotients of weak dependent products shows that
existence of the former is not just a particular feature of the category of types in \mltt,
the only example in~\cite{EmmeneggerPalmgren2017}.
On the contrary,
it provides a large class of examples of categories with weak finite limits and dependent full diagrams.
In particular, we obtain the local cartesian closure of the exact completion
of the category of spaces and homotopy classes of maps,
thus answering a question left open in~\cite{GranVitale1998}.

The first half of the paper is devoted to the proof that existence of dependent full diagrams
imply the local cartesian closure of the exact completion.
In order to simplify the presentation,
we split the argument in two steps.
In \cref{sec:fulldiag}, after a brief recap on ex/wlex completions,
we define a non-indexed version of a full diagram in \catct and,
assuming that \excomct (or, equivalently, \catct)
has the needed structure for implication and universal quantification,
we construct from it an exponential in \excomct.
\Cref{sec:dfulldiag} contains the definition of the more general dependent full diagrams
and the proof that their existence gives rise
both to right adjoints to inverse images and to non-indexed full diagrams.
Finally, \cref{sec:hocat} covers the case of homotopy categories.

\section{Full diagrams}
\label{sec:fulldiag}

We briefly recall some basic facts about weak limits and the exact completion
which are essential to our treatment.
For additional background notions and notations we refer to~\cite{CarboniVitale1998,CarboniRosolini2000}
and to section 1 of~\cite{Emmenegger2018}.
In this section and the next one
\catct denotes a category with weak finite limits
and \excomct denotes its ex/wlex completion,
that we shall refer to as the exact completion of \catct.
Regular epis are denoted with a triangle head, like in $ A \repi B $,
while hook arrows $ A \mono B $ denote monos.

Recall that weak limits are defined as usual limits but without requiring
uniqueness of a universal arrow.
An arrow $ f \colon V \to Y $ in \catct
from a weak product $ Z \toop V \to X $ of $ Z $ and $ X $
is \emph{determined by projections}~\cite{Emmenegger2018}
if it coequalises every pair of arrows jointly coequalised
by the two weak product projections.
A \emph{weak exponential}~\cite[Definition 2.1]{CarboniRosolini2000}
in \catct of two objects $ Y $ and $ X $ consists of an object $ W $,
a weak product $ W \toop V \to X $ and an arrow $ V \to Y $
which is determined by projections,
such that for any object $ W' $,
weak product $ W' \toop V' \to X $
and arrow $ V' \to Y $ determined by projections,
there are (not necessarily unique) arrows
$ h \colon W' \to W $ and $ k \colon V' \to V $
making the obvious diagram commute.

An object $ X $ in a category \catet is called \emph{(regular) projective}
if, for every regular epi $ g \colon A \repi B $ and arrow $ f \colon X \to B $,
there is a \emph{lift} of $ f $ against $ g $,
\ie an arrow $ f' \colon X \to A $ such that $ g f' = f $.
A \emph{projective cover} of an exact category \catet is a full subcategory
\catpt such that
\textit{(i)} every object in \catpt is projective in \catet, and
\textit{(ii)} every object in \catet is covered by an object in \catpt,
\ie for every $ A $ in \catet there are $ X $ in \catpt
and a regular epi $ X \repi A $.
\catet has \emph{enough projectives} if it has a projective cover.
Whenever we are given a projective cover \catpt of an exact category \catet,
we adopt the convention of using letters from $ P $ to $ Z $
for objects in \catpt.
The reader should however keep in mind that
\catpt is not in general closed under limits that exist in \catet.

Recall from~\cite{CarboniVitale1998} that,
for a category \catct with weak finite limits,
the exact completion \excomct can be described as the category whose objects
are pseudo equivalence relations $R \psrel X$ in \catct
and whose arrows from $R \psrel X$ to $S \psrel Y$  are 
equivalence classes of those arrows $X \to Y$ of \catct
that map related elements to related elements,
where $f,f' \colon X \to Y$ are equivalent if they are related in $S \psrel Y$.
The full subcategory of \excomct on the free pseudo equivalence relations,
\ie those with equal legs,
is a projective cover of \excomct.
It is equivalent to \catct via
the embedding of \catct into \excomct,
that maps an object $X$ to the pair of identities $\id_X, \id_X \colon X \psrel X$.
Conversely, a projective cover \catpt of an exact category \catet
has weak finite limits and
its exact competion $ \excomm{\catpm} $
is equivalent to \catet~\cite[Thm.\ 16]{CarboniVitale1998}.
A weak limit in \catpt is obtained covering with a projective
the corresponding limit in \catet.
More generally,
a cone in \catpt over a diagram $\mathcal{D}$ in \catpt is a weak limit
\iff the unique arrow from the cone
into the limit in \catet of $\mathcal{D}$ is a regular epi.
see~\cite[Lemma 1.7]{Emmenegger2018}.

Given a projective cover \catpt of \catet exact and a weak product
$Z \overset{p_1}{\longleftarrow} V \overset{p_2}{\longrightarrow} X$ in \catpt,
an arrow $ f \colon V \to Y $ is determined by projections in \catpt \iff
it factors in \catet through the regular epi
$ \pbkun{p_1,p_2} \colon V \repi Z \times X $.
It follows that \catpt has weak exponentials if \catet is cartesian closed:
given $ Y,X \in \catpm $, one just need cover $ Y^X $ with a projective $ W $
and to do the same with $ W \times X $.
However, as argued in the last section of~\cite{Emmenegger2018},
the universal property of weak exponentials does not seem to be suited
to prove the cartesian closure of \excomct
when \catct only has weak finite limits.
A complete characterisation in terms of what we called
\emph{extensional simple products} is presented in~\cite{Emmenegger2018},
but here we look at yet another (weakly) universal property.

\begin{rem} \label{rem:totrel}
Let \catet be exact with a projective cover \catpt
and let $Z,X,Y$ be three objects in \catpt.
There is an isomorphism between the poset of subobjects
$ \subm_{\catem}(Z \times X \times Y) $
and the poset reflection $ \posrefpm{(Z,X,Y)} $
of the category of spans
\[
\xycenterm[=1em]{
&	V	\xys{}[dl] \xys{}[d] \xys{}[dr]
&\\
Z	&	X	&	Y	}
\]
in \catpt
over $Z,X$ and $Y$~\cite[Lemma 35]{CarboniVitale1998}.
This isomorphism restricts between those subobjects
$ R \mono Z \times X \times Y $ such that $ R \to Z \times X $ is regular epic,
and those (equivalence classes of) spans
such that $ Z \toop V \to X $ is a weak product.
This restricts further between those subobjects
such that $ R \to Z \times X $ is iso,
\ie essentially graphs of arrows $Z \times X \to Y$,
and those spans
such that $ Z \toop V \to X $ is a weak product and
$ V \to Y $ is determined by projections.
\end{rem}

The previous remark allows us to
understand arrows $ f \colon V \to Y $
in a category \catct with weak finite limits,
where $ Z \toop V \to X $ is a weak product, as families,
indexed by $ Z $, of total relations
(\ie multi-valued functions) from $ X $ to $ Y $.
Such a total relation is functional (\ie a single-valued function)
precisely when $ f $ is determined by projections.
This reading suggests that,
in order to have a suitable universal property with respect to arbitrary arrows
$ V \to Y $ out of a weak product,
we should look for some property of closure
with respect to (families of) total relations from $ X $ to $ Y $.
A promising notion,
that indeed proves to be useful, is that of a \emph{full set}
from the constructive set theory CZF.

A set $ f $ is full for two sets $ a $ and $ b $
if it consists of multi-valued functions from $ a $ to $ b $ and,
for every multi-valued function $ r $ from $ a $ to $ b $,
there is $ s \in f $ such that $ s \subseteq r $.
The \emph{Fullness Axiom} states that,
for any two sets $ a $ and $ b $, there is a set which is full for $ a $ and $ b $.
This axiom was introduced in the context of
Constructive Zermelo-Fraenkel set theory (CZF) by Peter Aczel in~\cite{Aczel1978}
in order to provide a simpler formulation of the axiom schema of Subset Collection.
This axiom implies in particular the so-called Exponentiation Axiom,
that the class of functions between two sets is a set.
The next definition is inspired by the notion of full set.

\begin{defin} \label{def:fullobj}
Let $ X $ and $ Y $ be two objects in a category \catct with weak finite limits.
A \emph{full diagram from $ X $ to $ Y $} consists of a weak product
$ U \overset{p_1}{\longleftarrow} V \overset{p_2}{\longrightarrow} X $
and an arrow $ f \colon V \to Y $
such that,
for every object $ U' $, weak product $ U' \toop V' \to X $
and arrow $ f \colon V' \to Y $,
there are an arrow $ h \colon U' \to U $,
a weak pullback $ U' \toop P \to V $ of $ U' \to U \toop V $ and
an arrow $ k \colon P \to V' $ such that the diagram below commutes.

\begin{equation} \label{fullobj:univ}
\xycenterm{
&	U'	\xydot{}[dl]_-h
&&	P	\xydot{}[ll] \xydot{}[dl] \xydot{}[r]_-k
&	V'	\xys{}@/_1.5em/[lll]
		\xys{|!{[dl];[dr]}\hole}@/^/[ddll]^(.7){f'}
		\xys{}@/^/[dr]
&\\
U	&&	V	\xys{}[ll]^-{p_1} \xys{}[d]_-f \xys{}[rrr]_(.7){p_2}
&&&	X
\\
&&	Y	&&&}
\end{equation}
\end{defin}

\begin{rem} \label{rem:fullobj}\hfill
\begin{enumerate}
\item
The notion of full diagram is independent of the specific weak product in the following sense.
If the pair $ U \toop V \to X $, $ V \to Y $ is a full diagram from $ X $ to $ Y $,
then any other weak product $ U \toop W \to X $ together with the composite $ W \to V \to Y $
is a full diagram from $ X $ to $ Y $.
\item
In the case of a projective cover \catpt of an exact category \catet,
diagram~\eqref{fullobj:univ} in \cref{def:fullobj} can be written in \catet as

\[
\xycenterm[R=2.5em@C=3.5em]{
U' \times X	\xys{}[d]_-{h \times X}
&	P	\xyrepi{}[l] \xys{}[d] \xys{}[r]_-k
&	V'	\xyrepi{}@/_1.5em/[ll] \xys{}[d]^-{f'}
\\
U \times X	&	V	\xyrepi{}[l]^-{\pbkun{p_1,p_2}} \xys{}[r]_-f
&	Y	}
\]
where the left-hand square is covering,
\ie the induced arrow from $ P $ to the pullback
of $\pbkun{p_1,p_2}$ and $h \times X$ is a regular epi.
\end{enumerate}
\end{rem}

\begin{lem} \label{lem:wexpfull}
Suppose that \catct has binary products.
Then weak exponentials are full diagrams
and any full diagram from $X$ to $Y$
gives rise to a weak exponential of $Y$ and $X$.
\end{lem}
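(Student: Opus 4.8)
\emph{Strategy.} The gap to bridge is that a full diagram weakly classifies \emph{all} arrows $V'\to Y$ out of weak products, whereas a weak exponential only concerns arrows determined by projections. Binary products close this gap: an arrow out of a genuine product $Z\times X$ is automatically determined by projections, and the projections of such a product are jointly split through any weak product of $Z$ and $X$. The plan is therefore to arrange, in each direction, that the relevant weak product is an actual binary product, after which the constructions are essentially forced and the only verification with content is the commutativity clause involving the arrow to $Y$ — which is precisely where ``determined by projections'' is used. I expect that clause, together with the (routine) check that the chosen genuine pullbacks legitimately serve as the weak pullbacks required by \cref{def:fullobj} — the data of \cref{def:fullobj} for one weak pullback transferring to any other along the evident comparison commuting with the legs — to be the only points needing care.

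\emph{Weak exponentials are full diagrams.} Let $W\xleftarrow{\tilde p_1}\tilde V\xrightarrow{\tilde p_2}X$ with $\tilde f\colon\tilde V\to Y$ be a weak exponential. Given a weak product $U'\xleftarrow{q_1}V'\xrightarrow{q_2}X$ and an arbitrary $g\colon V'\to Y$, the weak-product property of $V'$ applied to $U'\xleftarrow{\pi_1}U'\times X\xrightarrow{\pi_2}X$ yields $c\colon U'\times X\to V'$ with $q_1c=\pi_1$ and $q_2c=\pi_2$. Then $gc\colon U'\times X\to Y$ is determined by projections, so the universal property of the weak exponential applied to $(U',U'\times X,\pi_1,\pi_2,gc)$ gives $h\colon U'\to W$ and $k'\colon U'\times X\to\tilde V$ with $\tilde p_1k'=h\pi_1$, $\tilde p_2k'=\pi_2$, $\tilde fk'=gc$. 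Choose any weak pullback $P$ of $U'\xrightarrow{h}W\xleftarrow{\tilde p_1}\tilde V$, with legs $\pi\colon P\to U'$, $\rho\colon P\to\tilde V$ and $h\pi=\tilde p_1\rho$, and set $k:=c\langle\pi,\tilde p_2\rho\rangle\colon P\to V'$. By construction $q_1k=\pi$ and $q_2k=\tilde p_2\rho$, so the projection triangles of \eqref{fullobj:univ} commute; and $gk=\tilde fk'\langle\pi,\tilde p_2\rho\rangle=\tilde f\rho$ because $k'\langle\pi,\tilde p_2\rho\rangle$ and $\rho$ have the same composites with $\tilde p_1$ (both $h\pi$) and with $\tilde p_2$ (both $\tilde p_2\rho$), hence the same composite with the determined-by-projections arrow $\tilde f$. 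Thus $W\xleftarrow{\tilde p_1}\tilde V\xrightarrow{\tilde p_2}X$, $\tilde f$ is a full diagram from $X$ to $Y$.

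\emph{A full diagram yields a weak exponential.} By \cref{rem:fullobj}(1) the given full diagram may be presented as $U\xleftarrow{\pi_1}U\times X\xrightarrow{\pi_2}X$, $\bar f\colon U\times X\to Y$; here $\bar f$ is determined by projections, so this has exactly the shape of a candidate weak exponential of $Y$ and $X$, and only its universal property must be checked. Let $W'\xleftarrow{q_1}V'\xrightarrow{q_2}X$ be a weak product and $g\colon V'\to Y$ determined by projections. Apply the defining property of the full diagram to $(W',V',q_1,q_2,g)$, taking as the weak pullback $P$ the actual pullback $W'\times X$ of $W'\xrightarrow{h}U\xleftarrow{\pi_1}U\times X$ (with legs $\pi_1\colon W'\times X\to W'$ and $h\times X\colon W'\times X\to U\times X$); this produces $h\colon W'\to U$ and $k_0\colon W'\times X\to V'$ with $q_1k_0=\pi_1$, $q_2k_0=\pi_2$ and $gk_0=\bar f\circ(h\times X)$. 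In particular $\langle q_1,q_2\rangle k_0=\id_{W'\times X}$. Put $k:=\langle hq_1,q_2\rangle\colon V'\to U\times X$, so that $\pi_1k=hq_1$ and $\pi_2k=q_2$. Since $\id_{V'}$ and $k_0\langle q_1,q_2\rangle$ have the same composites with $q_1$ and with $q_2$, and $g$ is determined by projections,
\[
g \;=\; g\,k_0\langle q_1,q_2\rangle \;=\; \bar f(h\times X)\langle q_1,q_2\rangle \;=\; \bar f\langle hq_1,q_2\rangle \;=\; \bar fk ,
\]
which completes the verification of the universal property of the weak exponential.
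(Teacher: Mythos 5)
Your proof is correct and follows essentially the same route as the paper's: exploit binary products so that arrows out of genuine products are automatically determined by projections, precompose test arrows with comparison maps into/out of the product (your $c$, your $\bar f = f s$ via \cref{rem:fullobj}(1), playing the role of the paper's sections of the retractions), and then invoke the weak-exponential or full-diagram universal property, using ``determined by projections'' exactly where the paper does. The only step you leave implicit --- transporting the full-diagram data from whatever weak pullback the universal property hands you to the actual pullback $W'\times X$ along the comparison $W'\times X\to P$ --- is the routine transfer you flagged, and it does go through, so there is no gap.
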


\begin{proof}
Since \catct has binary products, every weak product retracts onto the product of the same objects.
Also, an arrow from a weak product is determined by projections \iff
it factors through the retraction onto the product.
It follows that we may assume that the domain of a weak evaluation is a product, rather than just a weak product.

Let then $ W $ be a weak exponential of $ Y $ and $ X $
with weak evaluation $ e \colon W \times X \to Y $.
Given $ f \colon V \to Y $ from a weak product $ U' \toop V \to X $
we can take $ U \times X $ as $ P $ in \eqref{fullobj:univ}:
the arrow $ k \colon U \times X \to V $
is a section of the retraction $ V \to U \times X $
and the arrow $ h \colon U \to W $
is obtained by the universal property of the weak exponential
applied to the composite
$ f k \colon U \times X \to Y $.

For the converse,
let $ U \toop V \to X $, $ f \colon V \to Y $ form a full diagram
and let $ s \colon U \times X \mono V$
be a section of the retraction $ V \to U \times X $.
We shall prove that $ f s \colon U \times X \to Y $ exhibits $ U $
as a weak exponential of $ Y $ and $ X $.
Given $ f' \colon U' \times X \to Y $,
the universal property of full diagrams yields an arrow $ h \colon U' \to U $
and a commutative diagram
\[
\xycenterm[C=3em]{
U' \times X	\xys{}[d]_{h \times X}
&	P	\xys{}[l] \xys{}[d] \xys{}[r]
&	U' \times X	\xys{}[d]^{f'}
\\
U \times X	&	V	\xys{}[l] \xys{}[r]^-f
&	Y	}
\]
where the left-hand square is a weak pullback.
It follows that $ P \to U' \times X $ has a section $ s' $ such that the diagram
\[
\xycenterm{
U' \times X\,	\xys{}[d]_{h \times X} \xymonor{}[r]^-{s'}
&	P	\xys{}[d]
\\
U \times X\,	\xymonor{}[r]^-s	&	V	}
\]
commutes.
The equation $ f s (h \times X) = f' $ then follows immediately.
\end{proof}

In particular, a category with finite limits has full diagrams \iff it has weak exponentials.
\Cref{lem:wcc} shows that,
whenever the internal logic of \excomct (equivalently, of \catct)
supports implication and universal quantification,
the left-to-right implication also holds when \catct only has weak finite limits.

First, recall that descent in exact categories allows us to prove the following,
where $\posrefm{\abscatm{X}}$ denotes the poset reflection of the category $\abscatm{X}$.
See also \cite[Remark 1.9]{Emmenegger2018}.

\begin{lem} \label{lem:deslog}
\excomct has right adjoints to inverse images \iff
\catct has right adjoints to weak pullback functors,
\ie the functors
$ \posrefm{(\catcm/X)} \to \posrefm{(\catcm/Y)} $
induced by weak pullback along arrows $ f \colon Y \to X $.
\end{lem}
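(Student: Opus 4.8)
The plan is to identify \catct with the projective cover of \excomct furnished by the canonical embedding, so that both conditions become statements about subobject posets in the exact category \excomct. First I would record the basic translation: for an object $X$ of \catct, regarded as a projective of \excomct, image factorisation gives a poset isomorphism $\subm_{\excomcm}(X) \cong \posrefm{(\catcm/X)}$ sending an arrow $P \to X$ out of a projective to its image; projectivity of $P$ makes this order-reflecting in both directions. Since inverse images commute with image factorisations in the regular category \excomct, and since a weak pullback in \catct covers the corresponding pullback in \excomct, this isomorphism carries the inverse image functor $f^{*}$ along an arrow $f \colon Y \to X$ of \catct to the weak pullback functor $\posrefm{(\catcm/X)} \to \posrefm{(\catcm/Y)}$. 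The left-to-right implication then follows at once: a right adjoint to $f^{*}$ in \excomct transports, along these isomorphisms, to a right adjoint to the weak pullback functor along $f$.

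For the converse I would extend the translation to an arbitrary object $A$ of \excomct. Such an $A$ is the quotient $q_{A} \colon X \repi A$ of a projective whose kernel pair is itself covered by a projective, hence is presented by a pseudo equivalence relation $Z \psrel X$ in \catct; descent for subobjects along the regular epi $q_{A}$ then identifies $\subm_{\excomcm}(A)$, via the order-reflecting functor $q_{A}^{*}$, with the sub-poset of $\posrefm{(\catcm/X)}$ of those classes on which the two weak pullback functors along the legs of $Z \psrel X$ agree; call these the \emph{stable} classes. Given $f \colon A \to B$, presented by an arrow $\phi \colon X \to Y$ of \catct compatibly with chosen quotients $q_{A} \colon X \repi A$ and $q_{B} \colon Y \repi B$, one checks that under these identifications $f^{*}$ is simply the restriction to stable classes of the weak pullback functor $\phi^{*} \colon \posrefm{(\catcm/Y)} \to \posrefm{(\catcm/X)}$ (which does send $B$-stable classes to $A$-stable ones, because $\phi$ maps related elements to related elements). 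Therefore $f^{*}$ has a right adjoint provided both \textit{(a)}~the weak pullback functor $\phi^{*}$ has a right adjoint $\Pi_{\phi}$ --- this is the hypothesis --- and \textit{(b)}~the inclusion of the stable classes into $\posrefm{(\catcm/Y)}$ has a right adjoint $R$: one then takes $\forall_{f}$ to be $R \circ \Pi_{\phi}$, restricted to the stable classes.

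For \textit{(b)}, the candidate right adjoint sends a class $s$ to $\Pi_{\pi_{1}}(\pi_{1}^{*}s \wedge \pi_{2}^{*}s)$, where $\pi_{1}, \pi_{2} \colon W \psrel Y$ is the presentation of $B$, the meet $\wedge$ in $\posrefm{(\catcm/Y)}$ is computed by a weak pullback (available since \catct has weak finite limits), and $\Pi_{\pi_{1}}$ is the right adjoint to the weak pullback functor along $\pi_{1}$, again furnished by the hypothesis. Verifying that this assignment actually lands among the stable classes and is genuinely right adjoint to the inclusion is the heart of the argument: it is precisely the descent computation in the exact category \excomct, and it uses the full equivalence relation structure (reflexivity, symmetry, transitivity) of $W \psrel Y$, in exact analogy with the verification that inverse image along a surjection of sets has a right adjoint. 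This is the step I expect to be the main obstacle; it can be carried out along the lines of \cite[Remark 1.9]{Emmenegger2018}. The remaining points --- that these formulas do not depend on the chosen presentations, and that the two adjunctions compose correctly --- are routine bookkeeping.
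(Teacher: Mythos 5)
Your proposal is correct, and for the substantive direction it takes a more hands-on route than the paper. The easy direction is exactly the paper's: transport the right adjoint along the isomorphisms $ \subm_{\excomcm}(\exfnctm X) \cong \posrefm{(\catcm/X)} $, under which inverse image along (the image of) an arrow of \catct corresponds to the weak pullback functor. For the converse, the paper simply invokes a general adjoint-lifting/descent result (Theorem 2 in Section 3.7 of Barr--Wells), whereas you re-prove descent by hand: present $A$ and $B$ by pseudo equivalence relations, identify $\subm_{\excomcm}(B)$ with the stable classes over $Y$, and build $\forall_f$ as $R\circ\Pi_\phi$, where $R(s)=\Pi_{\pi_1}(\pi_1^*s\wedge\pi_2^*s)$ is the candidate coreflection onto stable classes. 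This plan is sound, and the step you flag as the heart --- stability of $R(s)$ --- does close with exactly the resources at hand: to get $\pi_2^*R(s)\leq\pi_1^*\Pi_{\pi_1}\pi_2^*s$ it suffices, by the unit of $\exists_{\pi_1}\dashv\pi_1^*$, to prove $\exists_{\pi_1}\pi_2^*R(s)\leq\Pi_{\pi_1}\pi_2^*s$; transpose, use Beck--Chevalley for existential images along the pullback of $\pi_1$ against itself (valid in the exact category \excomct), cover that pullback by a weak pullback in \catct, and discharge the resulting inequality using the symmetry and transitivity witnesses $\sigma,\tau$ of the pseudo equivalence relation together with the counit of $\pi_1^*\dashv\Pi_{\pi_1}$; the remaining inequality $\pi_2^*R(s)\leq\pi_1^*s$ follows from the counit and $\sigma$, and the reverse inclusion by applying $\sigma^*$. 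In particular no Beck--Chevalley condition for the right adjoints themselves is needed, which is the point one might otherwise worry about in your sketch. Two minor remarks: the conjunct $\pi_1^*s$ is redundant, since reflexivity gives $\Pi_{\pi_1}\pi_1^*s=s$ and hence $R(s)=s\wedge\Pi_{\pi_1}\pi_2^*s$; and independence of the chosen presentations need not be verified, since having a right adjoint is a property of $f^*$, not structure. What your route buys is an explicit formula for $\forall_f$ and independence from the cited theorem; what the paper's route buys is brevity.
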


\begin{proof}
One direction follows by the natural isomorphisms
$ \subm_{\excomcm}(\exfnctm X) \cong \posrefm{(\catcm/X)} $.
For the other direction apply Theorem 2 in Section 3.7 of~\cite{BarrWells1985}.
\end{proof}

We now need a lemma which, for convenience,
we formulate using an exact category \catet with a fixed projective cover \catpt.

\begin{lem} \label{lem:wcc}
Let \catet be an exact category with a projective cover \catpt
and suppose that \catet has right adjoints to inverse images along any arrow.
If \catpt has full diagrams, then for every $ X $ in \catpt and $ B $ in \catet
there are an object $ W $ in \catpt and an arrow $ W \times X \to B $ in \catet
which are weakly terminal \wrt objects $ Z $ in \catpt and arrows $ Z \times X \to B $ in \catet.
\end{lem}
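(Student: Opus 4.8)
The plan is to mimic, at the level of the projective cover, the construction one would carry out if \catet were cartesian closed --- where one covers the exponential $B^X$ by a projective, postcomposes evaluation with the induced regular epi, and lifts transposes along the cover. Since \catet need not be cartesian closed, its exponential is replaced by a full diagram from $X$ to a projective cover of $B$, and the right adjoints to inverse images are used to carve out of the base of that full diagram the largest part over which the relevant arrow descends to the honest product. So first I would fix a regular epi $q \colon Y \repi B$ with $Y$ in \catpt and a full diagram from $X$ to $Y$ in \catpt, consisting of a weak product $U \overset{p_1}{\longleftarrow} V \overset{p_2}{\longrightarrow} X$ and $f \colon V \to Y$; recall that $\pi := \pbkun{p_1,p_2} \colon V \repi U \times X$ is then a regular epi in \catet.

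Let $\kappa_1, \kappa_2 \colon V \times_{U \times X} V \psrel V$ be the kernel pair of $\pi$, let $E \mono V \times_{U \times X} V$ be the equaliser of $q f \kappa_1$ and $q f \kappa_2$, and set $g := p_1 \kappa_1 = p_1 \kappa_2 \colon V \times_{U \times X} V \to U$. With $\forall_g$ the right adjoint to the inverse image $g^{*}$, I would put $W' := \forall_g(E) \mono U$. The counit $g^{*}(W') \leq E$ of $g^{*} \dashv \forall_g$ says precisely that the restriction of $q f$ to $V \times_U W' \mono V$ coequalises the kernel pair of the pullback $V \times_U W' \repi W' \times X$ of $\pi$; since \catet is exact, this regular epi is the coequaliser of its kernel pair, so $q f$ descends along it to a unique arrow $e' \colon W' \times X \to B$. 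Covering $W'$ by a projective $w \colon W \repi W'$ with $W$ in \catpt and setting $e := e' \circ (w \times X) \colon W \times X \to B$ produces the candidate pair.

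For weak terminality, given $Z$ in \catpt and $\gamma \colon Z \times X \to B$, I would choose a weak product $Z \toop V_0 \to X$ in \catpt so that the induced $\rho \colon V_0 \repi Z \times X$ is a regular epi, and --- using that $V_0$ is projective --- lift $\gamma \rho$ along $q$ to some $f_0 \colon V_0 \to Y$. Applying the universal property of the full diagram to $f_0$ yields an arrow $h \colon Z \to U$, a weak pullback $Z \toop P \to V$ of $Z \xrightarrow{h} U \xleftarrow{p_1} V$, and an arrow $k \colon P \to V_0$ making \eqref{fullobj:univ} commute. Transcribing that diagram in \catet via \cref{rem:fullobj}, $P$ covers $Q := (Z \times X) \times_{U \times X} V$ compatibly with the two projections $\mu \colon Q \to Z \times X$ and $\alpha \colon Q \to V$, and the commutativity of \eqref{fullobj:univ}, after composing with $q$ and cancelling the regular epi $P \repi Q$, collapses to the single identity $q f \alpha = \gamma \mu$.

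The crucial and, I expect, hardest step is to show that $h$ factors through $W' \mono U$. I would pull $W' = \forall_g(E)$ back along $h$: writing $R := Z \times_U (V \times_{U \times X} V)$ for the resulting pullback, with projections $g' \colon R \to Z$ and $\beta \colon R \to V \times_{U \times X} V$, the Beck--Chevalley condition gives $h^{*}(W') = \forall_{g'}(\beta^{*}E)$, so it suffices that $\beta$ factor through $E$, \ie that $\beta$ equalise $q f \kappa_1$ and $q f \kappa_2$. This follows from $q f \alpha = \gamma \mu$ together with the observation that the two maps $R \to Q$ induced by $\kappa_1$ and $\kappa_2$ have the same composite with $\mu$, because $R$ sits over the pullback $Z \times_U V \cong Q$. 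Hence $h^{*}(W') = \forall_{g'}(\top) = \top$, so $h = (W' \mono U) h_1$ for a unique $h_1 \colon Z \to W'$; since $Z$ is projective, $h_1$ lifts along the regular epi $w$ to $h_2 \colon Z \to W$ with $w h_2 = h_1$. It then remains to verify $e \circ (h_2 \times X) = \gamma$, which one does by precomposing with the regular epi $\mu$: since $\pi \alpha = (h \times X) \mu$ factors through $W' \times X \mono U \times X$, the arrow $\alpha$ factors through $V \times_U W' \mono V$, and then both $e \circ (h_2 \times X) \circ \mu$ and $\gamma \mu$ reduce to $q f \alpha$ by the defining property of $e'$. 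Apart from that one factorisation --- the single place where exactness (descent), the behaviour of the chosen right adjoints under reindexing, and the bookkeeping of \eqref{fullobj:univ} in \catet must all be combined at once --- everything is routine manipulation of regular epis and lifts along them.
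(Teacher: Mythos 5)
Your proof is correct and takes essentially the same route as the paper's: your $W' = \forall_g(E)$ is (up to isomorphism) the paper's subobject $F \mono U$ of codes of functional relations, your descent of the composite of $f$ with the cover of $B$ along $V \times_U W' \repi W' \times X$ is the paper's construction of $e$, and your Beck--Chevalley argument for factoring $h$ through $W'$ plays exactly the role of the paper's membership criterion~\eqref{wcc:wedef}. The remaining differences are cosmetic: the paper phrases functionality via the image $I$ of $\pbkun{p_1,p_2,bf}$ and an internal-logic formula rather than the equaliser on the kernel pair of $V \repi U \times X$, and it lifts the given arrow $Z \times X \to B$ through a chosen cover $V' \to Y$ where you use projectivity of a weak product, so no changes are needed.
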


\begin{proof}
Let $ b \colon Y \repi B $ be a cover of $ B $ with $ Y $ in \catpt,
and take a full diagram
$ U \overset{p_1}{\longleftarrow} V \overset{p_2}{\longrightarrow} X $, $ f \colon V \to Y $.
The idea is to extract from $ U $ (codes of) functional relations.
Let $ \gamma = \pbkun{\gamma_1,\gamma_2,\gamma_3} \colon I \mono U \times X \times B $
be the image factorisation of
$ \pbkun{p_1,p_2,bf} \colon V \to U \times X \times B $
and denote with $ \phi \colon F \mono U $
the subobject defined by the formula in context
\[
u \typing U \ |\ (\forall x \typing X)(\forall y,y' \typing B)\, \gamma(u,x,y) \land \gamma(u,x,y') \limpd y = y'.
\]
In other words,
given an arrow $ a \colon A \to U $ in \catet, consider the diagram
\[
\xycenterm[=1.5em]{
H	\xyrepi{}[dd]  \xyrepi{}[dr] \xys{}[rr]
&&	K	\xyrepi{|!{[dl];[dr]}{\hole}}[dd] \xyrepi{}[dr]
&\\
&	A \times_U I	\xyrepi{|(.65){A \times \gamma_2}}[dd] 
			\xys{}[rr]_(.7){\pi_2}
&&	I	\xyrepi{}[dd]^-{\pbkun{\gamma_1,\gamma_2}}
\\
A \times_U I	\xyrepi{}[dr]_-{A \times \gamma_2}
		\xys{|!{[ur];[dr]}{\hole}}[rr]^-(.75){\pi_2}
&&	I	\xyrepi{|(.4){\pbkun{\gamma_1,\gamma_2}}}[dr]
&\\	
&	A \times X	\xys{}[rr]_{a \times X}
&&	U \times X	}
\]
where all the squares are pullback.
Then
\begin{equation} \label{wcc:wedef}
a \leq \phi \ \liff \
A \times_U I \overset{\pi_2}{\longrightarrow} I \overset{\gamma_3}{\longrightarrow} B
\text{ coequalises } H \psrel A \times_U I.
\end{equation}

The existence of an arrow $ e \colon F \times X \to B $
follows from~\eqref{wcc:wedef} taking $ a = \phi $.
We shall show that $ e $ satisfies the required universal property.
It then follows easily that $ W \times X \to B
$ satisfies it as well for any cover $ W \repi F $.
In particular, \catpt will have weak exponentials.

Given $ g \colon Z \times X \to B $ with $ Z \in \catpm $,
let $ g' \colon V' \to Y $ be a cover of $ g $,
\ie be such that the right-hand square in diagram~\eqref{wcc:rcuniv} is covering.
By the universal property of a full diagram,
we have, in particular, an arrow $ h \colon Z \to U $ and a commutative diagram
\begin{equation} \label{wcc:rcuniv}
\xycenterm[R=2.5em@C=4em]{
Z \times X	\xys{}[d]_{h \times X}
&	P	\xyrepi{}[l] \xys{}[d] \xyrepi{}[r]
&	Z \times X	\xys{}[d]^g
\\
U \times X
&	V	\xyrepi{}[l]_-{\pbkun{p_1,p_2}} \xys{}[r]^-{bf}
&	B	}
\end{equation}
where the left-hand square is covering.
It follows that
the induced arrow $q \colon P \to Z \times_U I$ is a regular epi.
Consider now the solid arrows in diagram~\eqref{wcc:weuniv} below.
The two right-hand squares are pullback
and the front left-hand square commutes by definition of $e$.
\begin{equation} \label{wcc:weuniv}
\xycenterm[R=1.5em@C=2.5em]{
&&&	Z \times_U I	\xyrepi{|(.35){Z \times \gamma_2}|!{[dl];[dr]}{\hole}}[dd]
			\xys{}@/_/[dlll]_-{\pi'_2}
			\xydot{|-{h' \times I}}[dl]
			\xys{}[dr]^-{\pi'_2}
&\\
I	\xys{}[dd]_(.45){\gamma_3}
&&	F \times_U I	\xyrepi{|(.45){F \times \gamma_2}}[dd]
			\xys{}[rr]_(.7){\pi_2}
			\xys{}[ll]^-{\pi_2}
&&	I	\xyrepi{}[dd]^(.45){\pbkun{\gamma_1,\gamma_2}}
\\
&&&	Z \times X	\xys{|!{[ul];[dl]}{\hole}}@/_/[dlll]_(.75)g
			\xys{}[dr]^-{h \times X}
			\xydot{|-{h' \times X}}[dl]
&\\
B
&&	F \times X	\xys{}[ll]^(.59)e
			\xys{}[rr]_-{\phi \times X}
&&	U \times X
}
\end{equation}
In order to obtain an arrow
$ h' \colon Z \to F $ such that $ e (h' \times X) = g $,
it is enough to show that the square with side $g$ commutes.
Indeed, in this case,
there is $ h' \colon Z \to F $ such that $ \phi h' = h $
by~\eqref{wcc:wedef}.
It follows that
the upper triangle(s) and the square with dotted sides
in diagram~\eqref{wcc:weuniv} commute.
Since $Z \times \gamma_2$ is (regular) epic,
the lower left-hand triangle commutes as well.

To see that the square with side $g$ in diagram~\eqref{wcc:weuniv}
commutes note that,
precomposing its two sides with $q \colon P \repi Z \times_U I$
yields the right-hand square in diagram~\eqref{wcc:rcuniv}.
The claim follows from
the fact that $q$ is (regular) epic.
\end{proof}

\begin{theor} \label{thm:ccex}
Suppose that \catct has right adjoints to weak pullback functors.
If \catct has full diagrams, then \excomct is cartesian closed.
\end{theor}

\begin{proof}
In the terminology of~\cite{Emmenegger2018},
\cref{lem:deslog,lem:wcc} prove that if a category has weak finite limits,
right adjoints to weak pullback functors and full diagrams,
then it has extensional exponentials too.
The statement follows
from Lemma 2.13 in~\cite{Emmenegger2018}.
\end{proof}

Below we collect together the results in this section.

\begin{corol} \label{cor:ccwrap}
Suppose that \catct has right adjoints to weak pullback functors,
and consider the following.
\begin{enumerate}
\item \label{ccwrap:full}
\catct has full diagrams.
\item \label{ccwrap:cc}
\excomct is cartesian closed.
\item \label{ccwrap:wcc}
\catct has weak exponentials.
\end{enumerate}
We have \ref{ccwrap:full} $ \limpd $ \ref{ccwrap:cc} $ \limpd $ \ref{ccwrap:wcc}.
If \catct has binary products,
then \ref{ccwrap:wcc} $ \limpd $ \ref{ccwrap:full}.
\end{corol}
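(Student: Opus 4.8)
The plan is to read the corollary off from the results already established in this section, the only implication requiring a genuine argument being \ref{ccwrap:cc} $\limpd$ \ref{ccwrap:wcc}. First I would observe that \ref{ccwrap:full} $\limpd$ \ref{ccwrap:cc} is exactly \cref{thm:ccex}, and that, under the additional hypothesis of binary products, \ref{ccwrap:wcc} $\limpd$ \ref{ccwrap:full} is the second half of \cref{lem:wexpfull}; each of these is then dispatched in a single line. So the work goes into \ref{ccwrap:cc} $\limpd$ \ref{ccwrap:wcc}, which, as the recipe recalled near the start of the section already suggests, does not even use the standing hypothesis on right adjoints.

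For \ref{ccwrap:cc} $\limpd$ \ref{ccwrap:wcc} I would argue through the embedding $\exfnctm \colon \catcm \to \excomcm$, which identifies \catct with a projective cover of the exact --- hence finitely complete --- category \excomct. Fix $X, Y \in \catcm$. In \excomct form the exponential $(\exfnctm Y)^{\exfnctm X}$, choose $W \in \catcm$ together with a regular epi $\exfnctm W \repi (\exfnctm Y)^{\exfnctm X}$, and then choose $V \in \catcm$ together with a regular epi $\exfnctm V \repi \exfnctm W \times \exfnctm X$ onto the product in \excomct. By the characterisation of weak limits in a projective cover recalled at the start of the section, the resulting cone $W \overset{p_1}{\toop} V \overset{p_2}{\to} X$ is a weak product in \catct; and since the composite of that cover with the evaluation, $\exfnctm V \repi \exfnctm W \times \exfnctm X \to \exfnctm Y$, factors through the regular epi $\pbkun{p_1, p_2} \colon \exfnctm V \repi \exfnctm W \times \exfnctm X$, the corresponding arrow $V \to Y$ in \catct is determined by projections.

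It remains to check the weak universal property of $W \toop V \to X$, $V \to Y$. Given another weak product $W' \toop V' \to X$ in \catct and an arrow $V' \to Y$ determined by projections, I would factor $\exfnctm V' \to \exfnctm Y$ through $\pbkun{p'_1, p'_2} \colon \exfnctm V' \repi \exfnctm W' \times \exfnctm X$ and transpose across the cartesian closed structure to a map $\exfnctm W' \to (\exfnctm Y)^{\exfnctm X}$; lift this along $\exfnctm W \repi (\exfnctm Y)^{\exfnctm X}$ using that $\exfnctm W'$ is projective, obtaining $h \colon W' \to W$; then lift $\exfnctm V' \repi \exfnctm W' \times \exfnctm X \xrightarrow{\exfnctm h \times \exfnctm X} \exfnctm W \times \exfnctm X$ along $\exfnctm V \repi \exfnctm W \times \exfnctm X$ using that $\exfnctm V'$ is projective, obtaining $k \colon V' \to V$; and finally verify that $h$ and $k$ make the required diagram commute by chasing it in \excomct, passing from an equation over $\exfnctm W' \times \exfnctm X$ to one over $\exfnctm V'$ via the epimorphism $\exfnctm V' \repi \exfnctm W' \times \exfnctm X$.

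I do not expect a real obstacle in this last implication: the only point to watch is that $W$ and $V$ must be chosen as \emph{covers} of $(\exfnctm Y)^{\exfnctm X}$ and of $\exfnctm W \times \exfnctm X$ respectively --- not as those objects themselves --- so that projectivity of the primed objects can be invoked to produce $h$ and $k$; beyond that, the verification is routine transposition and diagram chasing internal to the exact category \excomct.
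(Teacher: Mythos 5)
Your proposal is correct and follows the paper's own route: (1)$\limpd$(2) is \cref{thm:ccex}, (3)$\limpd$(1) under binary products is \cref{lem:wexpfull}, and (2)$\limpd$(3) is precisely the cover-the-exponential argument the paper records earlier in the section (cover $Y^X$ in \excomct by a projective $W$, then cover $W\times X$, and use projectivity plus the factorisation criterion for ``determined by projections''). Your elaboration of that last implication, including the remark that it does not use the right-adjoints hypothesis, is accurate and matches the intended proof.
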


\section{Dependent full diagrams}
\label{sec:dfulldiag}

Recall that \catct denotes a category with weak finite limits
and \excomct its exact completion.
In this section we define an indexed version of full diagrams,
whose existence will endow the internal logic of \catct (hence of \excomct)
with implication and universal quantification.

\begin{defin} \label{def:fulldiag}
Let $ y \colon Y \to X $ and $ x \colon X \to J $
be two arrows in a category with weak finite limits.
A \emph{dependent full diagram over $ x,y $} is a commutative diagram
\begin{equation}\label{depfulldiag}
\xycenterm[C=3em@R=1.5em]{
Y	\xys{}[dr]_-y
&	V	\xys{}[l]_-f \xys{}[d]^-{p_2} \xys{}[r]^-{p_1}
&	U	\xys{}[d]^-u
\\
&	X	\xys{}[r]_-x	&	J	}
\end{equation}
such that the square is a weak pullback and,
for every such diagram $u',f'$ over $y,x$ as below,
there are an arrow $ h \colon U' \to U $,
a weak pullback $ V \toop P \to U' $
of $ V \overset{p_1}{\longrightarrow} U \overset{h}{\longleftarrow} U' $
and an arrow $ k \colon P \to V' $ making the diagram below commute.

\[
\xycenterm[C=1.5em]{
&	V'	\xys{}[dl]_-{f'}
		\xys{|!{[dl];[d]}\hole}[ddr]
		\xys{}@/^1.5em/[rrrr]
&&	P	\xydot{|-k}[ll] \xydot{}[dl] \xydot{}[rr]
&&	U'	\xydot{|-h}[dl] \xys{}@/^1em/[ddl]^-{u'}
\\
Y	\xys{}[drr]_-y
&&	V	\xys{|(.6)f}[ll] \xys{}[d]^-{p_2} \xys{}[rr]^-{p_1}
&&	U	\xys{}[d]_-u
&\\
&&	X	\xys{}[rr]_-x	&&	J	&}
\]
\end{defin}

The same observations as in \cref{rem:fullobj} apply,
mutatis mutandis, to dependent full diagrams.
Moreover, it is not difficult to see that
dependent full diagrams generalise full families of pseudo-relations
from~\cite{EmmeneggerPalmgren2017},
in the sense that the two notions coincide
in well-pointed categories with finite products and weak equalisers.

\begin{rem}\label{rem:axF}
Another category-theoretic version of Aczel's Fullness Axiom
was introduced in~\cite{vdBergMoerdijk2008}
in the context of Algebraic Set Theory~\cite{JoyalMoerdijk1995}.
There one deals with classes of arrows, called \emph{small maps},
in categories which are at least regular.
According to the properties satisfied by the small maps,
various set theories may be interpreted in this structure.
In particular, in order to interpret the Fullness Axiom
in a suitable category \catet
equipped with a class of small maps $\mathcal{S}$,
van den Berg and Moerdijk introduce a condition
$\mathrm{F}(\mathcal{S})$,
called \textbf{(F)} in~\cite[Section 3.7]{vdBergMoerdijk2008}.
By taking the class $\mathcal{S}$ to consist of all arrows of \catet,
condition $\mathrm{F}=\mathrm{F}(\mathrm{Ar}\,\catem)$
makes sense for any regular category.
If a regular category \catet has a projective cover \catpt,
then a straightforward but lengthy computation shows that
F holds in \catet \iff \catpt has dependent full diagrams.
Moreover, Proposition 6.2.5 in~\cite{vdBergMoerdijk2008} entails that
condition F is stable under ex/reg completion.
As a reg/wlex completion is
in particular a regular category with enough projectives~\cite{CarboniVitale1998},
it follows immediately that
$\excomcm \equiv \indxup{(\indxup{\catcm}{reg/wlex})}{ex/reg} $
satisfies F whenever \catct has dependent full diagrams.
\end{rem}

Recall that a \emph{weak dependent product} of two composable arrows
$ Y \overset{y}{\longrightarrow} X \overset{x}{\longrightarrow} J $
in a category with weak finite limits is a commutative diagram as~\eqref{depfulldiag}
such that the square is a weak pullback, $f \colon V \to Y$ is determined by projections
and, for every such diagram $u' \colon U' \to J, f' \colon V' \to Y$ over $y$ and $x$,
there are arrow $U' \to U$ and $V' \to V$ making the obvious diagram commute.

As for the non-indexed case,
as soon as  \catct has pullbacks,
we may regard weak dependent products in \catct
as those dependent full diagrams whose weak pullback is a pullback.

\begin{lem}
If \catct has pullbacks,
weak dependent products are dependent full diagrams
and any dependent full diagram over $y,x$
gives rise to a weak dependent product of $y,x$.
\end{lem}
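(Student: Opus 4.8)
The plan is to transport the argument of \cref{lem:wexpfull} to the indexed setting, with binary products replaced by pullbacks over $J$ and the weak evaluation replaced by the arrow $f$ of \eqref{depfulldiag}. The starting point is a reformulation of ``determined by projections'': since \catct has pullbacks, for a cospan $X\xrightarrow{x}J\xleftarrow{u}U$ the pullback $X\times_J U$ exists, every weak pullback $V$ of $x,u$ retracts onto it via the comparison $r\colon V\to X\times_J U$ (a section $s$ of $r$ is produced by the weak universal property of $V$ from the projections of $X\times_J U$, so $p_1s=\pi_U$ and $p_2s=\pi_X$), and an arrow out of $V$ is determined by projections \iff it factors through $r$. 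In particular every arrow out of an actual pullback is automatically determined by projections. Using this together with the analogue for dependent full diagrams of \cref{rem:fullobj}(1), in both implications I may assume that the square appearing in the given diagram is the pullback square $U\xleftarrow{\pi_U}X\times_J U\xrightarrow{\pi_X}X$.

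For the first claim it then suffices to show that a weak dependent product of the shape $U\xleftarrow{\pi_U}X\times_J U\xrightarrow{\pi_X}X$, $f\colon X\times_J U\to Y$ with $yf=\pi_X$ is a dependent full diagram. Given test data $u'\colon U'\to J$, a weak pullback $V'$ of $x,u'$ with legs $p_1'\colon V'\to U'$, $p_2'\colon V'\to X$ and an arbitrary $f'\colon V'\to Y$ with $yf'=p_2'$, I choose a section $s'$ of the comparison $r'\colon V'\to X\times_J U'$ and apply the universal property of the weak dependent product to $u'$ and $f's'\colon X\times_J U'\to Y$; this is legitimate because $f's'$ is determined by projections, its domain being a pullback. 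This produces $h\colon U'\to U$ over $J$ and an arrow $X\times_J U'\to X\times_J U$ lying over $X$ and over $U'$ via $h$, hence necessarily $X\times_J h$, with $f\,(X\times_J h)=f's'$. Taking $P\coloneqq X\times_J U'$, which (using $uh=u'$) serves as the pullback of $X\times_J U\xrightarrow{\pi_U}U\xleftarrow{h}U'$ with legs $X\times_J h$ and $\pi_{U'}$, and $k\coloneqq s'$, a direct check shows the diagram in \cref{def:fulldiag} commutes.

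For the converse, let $U\xleftarrow{p_1}V\xrightarrow{p_2}X$, $f\colon V\to Y$ be a dependent full diagram and let $s$ be a section of the comparison $r\colon V\to X\times_J U$. I claim that $U\xleftarrow{\pi_U}X\times_J U\xrightarrow{\pi_X}X$, $fs\colon X\times_J U\to Y$ is a weak dependent product: indeed $y\,(fs)=p_2s=\pi_X$, and $fs$ is determined by projections since its domain is a pullback. Given test data $u'\colon U'\to J$, a weak pullback $V'$ of $x,u'$ with legs $p_1',p_2'$ and $f'\colon V'\to Y$ determined by projections with $yf'=p_2'$, I write $f'=f_0'r'$ with $f_0'\colon X\times_J U'\to Y$, $yf_0'=\pi_X$, and apply the universal property of the dependent full diagram to $u'$ and $f_0'$. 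This yields $h\colon U'\to U$ over $J$, a weak pullback $P$ of $V\xrightarrow{p_1}U\xleftarrow{h}U'$ with legs $a\colon P\to V$ and $b\colon P\to U'$, and an arrow $k\colon P\to X\times_J U'$ satisfying $f_0'k=fa$, $\pi_{U'}k=b$ and $\pi_Xk=p_2a$. The comparison sought is $\ell\coloneqq X\times_J h\colon X\times_J U'\to X\times_J U$, and it remains to prove $fs\ell=f_0'$ (whence $fs\,(\ell r')=f_0'r'=f'$, which is what a morphism of diagrams into the candidate requires). To that end I apply the weak universal property of $P$ to the cone $\bigl(s\ell\colon X\times_J U'\to V,\ \pi_{U'}\colon X\times_J U'\to U'\bigr)$ — a cone because $p_1s\ell=\pi_U\ell=h\pi_{U'}$ — obtaining $m\colon X\times_J U'\to P$ with $am=s\ell$ and $bm=\pi_{U'}$. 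By the three displayed equations $km\colon X\times_J U'\to X\times_J U'$ lies over both $U'$ and $X$, hence equals $\id$ because $X\times_J U'$ is an actual pullback; therefore $fs\ell=fam=f_0'km=f_0'$.

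The crux — and the only step that is not bookkeeping — is this final argument. The universal property of a dependent full diagram only supplies an arrow $k$ pointing \emph{into} $V'$, through the auxiliary weak pullback $P$, so there is no arrow to read off directly as the comparison $V'\to X\times_J U$ demanded by a weak dependent product. What makes it work is that, after the reductions of the first paragraph, both $P$ and $V'$ may be taken to be actual pullbacks, so the composite $km$, which automatically lies over the two feet $U'$ and $X$, is forced to be the identity; it is precisely the ``determined by projections'' clauses in the definition of weak dependent product that license those reductions. Everything else consists of routine diagram chases entirely parallel to \cref{lem:wexpfull}.
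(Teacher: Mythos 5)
Your proof is correct, and it is precisely the argument the paper intends: the paper's proof of this lemma is just the remark that it goes ``similarly to \cref{lem:wexpfull}'', and your write-up is the faithful transposition of that proof to the indexed setting (products over $J$ in place of binary products, the comparison $r\colon V\to X\times_J U$ in place of the retraction onto $U\times X$, the actual pullback $X\times_J U'$ serving as $P$ with a section as $k$ in one direction, and the section/cone map $m$ with $km=\id$ forced by pullback uniqueness in the other). The details you supply, including the justification of the WLOG reductions via the dependent analogue of \cref{rem:fullobj}, all check out, so this is essentially the same approach as the paper's, only written out in full.
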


\begin{proof}
The lemma is proven similarly to \cref{lem:wexpfull}.
\end{proof}

\begin{lem} \label{lem:totfull}
If \catct has dependent full diagrams,
then it has full diagrams.
\end{lem}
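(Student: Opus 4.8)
The plan is to realise a full diagram from $X$ to $Y$ as a dependent full diagram over $\pi_{1}\colon X\times Y\to X$ and $x\colon X\to 1$, postcomposed with the second projection. Since \catct has weak finite limits it has a weak terminal object $1$ and a weak product $X\times Y$, with projections $\pi_{1},\pi_{2}$. A dependent full diagram over $\pi_{1}$ and $x$ — that is, with $X\times Y$, $X$ and $1$ in the roles of $Y$, $X$ and $J$ of \cref{def:fulldiag} — furnishes an object $U$, an arrow $u\colon U\to 1$, a weak pullback $U\overset{p_{1}}{\longleftarrow}V\overset{p_{2}}{\longrightarrow}X$ of $x$ and $u$, and an arrow $\tilde f\colon V\to X\times Y$ with $\pi_{1}\tilde f=p_{2}$. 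I claim that the span $U\overset{p_{1}}{\longleftarrow}V\overset{p_{2}}{\longrightarrow}X$ together with $f:=\pi_{2}\tilde f\colon V\to Y$ is a full diagram from $X$ to $Y$. The motivation is the identification of \cref{rem:totrel}: a lift of $p_{2}$ along $\pi_{1}$ is nothing but an arbitrary arrow $V\to Y$ together with the redundant datum that its $X$-component equals $p_{2}$, so postcomposing with $\pi_{2}$ turns an indexed family into an unindexed one.

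To check the universal property of \cref{def:fullobj}, suppose we are given an object $U'$, a weak product $U'\overset{q_{1}}{\longleftarrow}V'\overset{q_{2}}{\longrightarrow}X$ and an arrow $f'\colon V'\to Y$. I would first repackage these as a diagram over $\pi_{1}$ and $x$: choose a weak pairing $\pbkun{q_{2},f'}\colon V'\to X\times Y$ (so $\pi_{1}\pbkun{q_{2},f'}=q_{2}$ and $\pi_{2}\pbkun{q_{2},f'}=f'$) and an arrow $u'\colon U'\to 1$ for which $V'$ with its two legs is a weak pullback of $x$ and $u'$. The universal property of the dependent full diagram then yields an arrow $h\colon U'\to U$, a weak pullback $V\overset{p}{\longleftarrow}P\overset{r}{\longrightarrow}U'$ of $V\overset{p_{1}}{\longrightarrow}U\overset{h}{\longleftarrow}U'$, and an arrow $k\colon P\to V'$ making the associated diagram commute; explicitly, $p_{1}p=hr$, $q_{1}k=r$, $q_{2}k=p_{2}p$ and $\pbkun{q_{2},f'}k=\tilde f p$. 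The very same $h$, $P$, $k$ then witness the universal property of the full diagram: $P$ is already a weak pullback of the required cospan; the identities $p_{1}p=hr$, $q_{1}k=r$ and $q_{2}k=p_{2}p$ are precisely the commutativities in \eqref{fullobj:univ} involving $U$, $U'$ and $X$; and postcomposing $\pbkun{q_{2},f'}k=\tilde f p$ with $\pi_{2}$ yields $f'k=\pi_{2}\tilde f p=fp$, the remaining commutativity, the one involving $Y$ (postcomposing instead with $\pi_{1}$ merely reproduces $q_{2}k=p_{2}p$, so nothing clashes). Finally, the observations of \cref{rem:fullobj} show that the construction is independent of the chosen weak product.

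I expect the argument to be essentially bookkeeping, with two places deserving care. The first is keeping straight the correspondence between the legs of the dependent full diagram and those of \eqref{fullobj:univ}, which is easy to tangle. The second is the mild reconciliation between the weak product span required by \cref{def:fullobj} and the weak pullback square over the weak terminal object that a dependent full diagram produces: in a category with weak finite limits these are interchangeable for present purposes, and passing between them costs only the non-unique comparison arrows already sanctioned by the statements. In contrast with \cref{lem:wexpfull}, the argument uses nothing beyond weak finite limits — a weak terminal object and weak binary products suffice.
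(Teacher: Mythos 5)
Your overall strategy---realise the full diagram from $X$ to $Y$ as a dependent full diagram over a two-arrow tower ending in a weak terminal object---is indeed the paper's, but your specific tower $X\times Y\xrightarrow{\pi_1}X\xrightarrow{x}1$ breaks down at exactly the point you dismiss as a ``mild reconciliation''. Since $1$ is only \emph{weakly} terminal, two parallel arrows into it need not be equal, and this bites twice. First, the square handed to you by the dependent full diagram is a weak pullback of $u\colon U\to 1$ and $x\colon X\to 1$; this is in general \emph{not} a weak product of $U$ and $X$, because for an arbitrary cone $Z\to U$, $Z\to X$ the two composites $Z\to 1$ may differ, so the weak-pullback property cannot be invoked. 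Concretely, in $\mathsf{Set}$ take $1=\{a,b\}$ (weakly terminal) and $x$ constant at $a$: the genuine dependent product of $\pi_1$ along $x$ is a dependent full diagram with $U=Y^X\sqcup\{*\}$ and $V=Y^X\times X$, and $V$ does not cover $U\times X$, so your produced span fails the first clause of \cref{def:fullobj}. Second, the repackaging of a competitor fails for the same reason: given a weak product $U'\xleftarrow{q_1}V'\xrightarrow{q_2}X$ and $f'\colon V'\to Y$, you need some $u'\colon U'\to 1$ with $u'q_1=xq_2$ before $V'$ can be a weak pullback of $x$ and $u'$, and such a factorisation need not exist (e.g.\ $U'$ a one-point set, $V'=X$, $q_2=\mathrm{id}$, and $x$ non-constant into the two-element $1$). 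So the competitor cannot in general be fed into the universal property of the dependent full diagram, and the data you want simply is not produced.

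The paper's proof avoids both problems by fattening the base of the tower: it takes $T$ weakly terminal, $U_0$ a weak product of $X$ and $T$, $V_0$ a weak product of $U_0$ and $Y$, and forms the dependent full diagram over the projections $V_0\to U_0\to T$. The pairing maps available in the weak products $U_0$ and $V_0$ are precisely what restore the compatibilities over $T$ that your construction lacks: they make the resulting span over $X$ an honest weak product (adjust the leg into $T$ before invoking the weak pullback), and they allow an arbitrary competitor $U'\leftarrow V'\to X$, $f'$ to be turned into a legitimate diagram over $V_0\to U_0\to T$ (take for its total space a weak pullback of $U'\to T\leftarrow U_0$ and use the pairings into $U_0$, $V_0$ and $V'$), after which the output of the dependent full diagram transports back to the data required by \cref{def:fullobj}. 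Your argument needs this extra ingredient; as written it silently treats $1$ as terminal.
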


\begin{proof}
A full diagram for two objects $ X $ and $ Y $ can be obtained
as a dependent full diagram over $ V \to U \to T $,
where $ T $ is weakly terminal, $ U $ is a weak product of $ X $ and $ T $,
$ V $ is a weak product of $ U $ and $ Y $
and the arrows are the obvious projections.
\end{proof}

\begin{lem} \label{lem:fullsl}
Let \catct be a category with weak finite limits.
\catct has dependent full diagrams \iff every slice of \catct has them.
\end{lem}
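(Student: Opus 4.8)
The plan is to exploit the observation that each slice $\catcm/J$ of \catct carries weak finite limits computed exactly as in \catct, and that the passage to a slice changes neither the diagrams appearing in \cref{def:fulldiag} nor the universal property they must satisfy. First I would record the following bookkeeping facts about the forgetful functor $\catcm/J \to \catcm$: it is faithful, it reflects and preserves commutativity of diagrams, and --- since the structure map on the apex of any cone over a cospan in a slice is uniquely determined --- weak pullbacks of cospans in $\catcm/J$ are precisely the weak pullbacks of the underlying cospans in \catct. In particular every slice of \catct again has weak finite limits, so the statement is well posed. The upshot is that, given a composable pair $Y \overset{y}{\longrightarrow} X \overset{x}{\longrightarrow} J$ in \catct, a commutative diagram of the shape of~\eqref{depfulldiag} over $y,x$ in \catct is literally the same datum as a diagram of that shape over the induced composable pair $(Y,xy) \overset{y}{\longrightarrow} (X,x) \overset{x}{\longrightarrow} (J,\id_J)$ in $\catcm/J$: the structure map of $U$ is forced to be $u$, that of $V$ to be $up_1 = xp_2$, and so on.

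For the right-to-left implication I would take a composable pair $Y \overset{y}{\longrightarrow} X \overset{x}{\longrightarrow} J$ in \catct, apply the hypothesis to the slice $\catcm/J$ and the pair $(Y,xy) \overset{y}{\longrightarrow} (X,x) \overset{x}{\longrightarrow} (J,\id_J)$, and claim that the resulting dependent full diagram, read back in \catct, is a dependent full diagram over $y,x$. The weak pullback square transports by the facts above; for the universal property one checks that a test diagram $u' \colon U' \to J$, $f' \colon V' \to Y$ over $y,x$ in \catct becomes, once $U'$ is given the structure map $u'$ and $V'$ the forced structure map of its weak pullback, exactly a test diagram over $(Y,xy),(X,x)$ in $\catcm/J$, and conversely; and that the witnessing data --- the arrow $h$, which is a morphism over $J$ precisely because $uh = u'$, the weak pullback $P$ of $V \to U \toop U'$, and the arrow $k$ --- correspond under the forgetful functor. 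For the converse I would fix a slice $\catcm/A$ and a composable pair $(Y,b) \overset{y}{\longrightarrow} (X,a) \overset{x}{\longrightarrow} (J,c)$ in it, take a dependent full diagram over the underlying pair $Y \overset{y}{\longrightarrow} X \overset{x}{\longrightarrow} J$ in \catct, declare the structure map of $U$ to be $cu \colon U \to A$ (the remaining structure maps, on $V$ and on any test object, being then forced), and run the same correspondence of test data --- now between \catct and $\catcm/A$ --- to conclude; alternatively one may simply note $\catcm/J \simeq (\catcm/A)/(J,c)$ and invoke the first paragraph twice. Throughout, the observations in \cref{rem:fullobj} apply mutatis mutandis, so one is free to replace the weak pullbacks involved by other weak pullbacks.

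I do not expect any conceptual obstacle; the content is entirely bookkeeping. The one point that deserves genuine care --- and which I would prove carefully --- is that the quantifier ``for every such diagram $u',f'$'' in the universal property of \cref{def:fulldiag} ranges over \emph{the same} collection whether interpreted in \catct or in the slice. This is exactly where one uses that equipping the apex of a cone with a structure map over the base is not an additional constraint but a forced choice, and that an arrow such as $h$ is automatically a slice morphism because of the commutativity $uh = u'$ (resp.\ $cuh = cu'$); once this is nailed down, both implications follow formally.
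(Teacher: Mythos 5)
Your proposal is correct and takes essentially the same route as the paper, whose entire proof is the observation that the forgetful functor $\catcm/J \to \catcm$ preserves and reflects weak pullbacks and dependent full diagrams. Your careful bookkeeping about forced structure maps (and the fact that $h$, $k$ are automatically slice morphisms by commutativity) is precisely the content packed into that one-line argument.
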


\begin{proof}
It follows from the fact that
the forgetful functor $\catcm/J \to \catcm$ preserves and reflects
weak pullbacks and dependent full diagrams.
\end{proof}

\begin{lem} \label{lem:fullrxadj}
If \catct has dependent full diagrams,
then \excomct has right adjoints to inverse images.
\end{lem}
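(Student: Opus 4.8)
The plan is to reduce the statement, via \cref{lem:deslog}, to the existence of right adjoints to the weak pullback functors $\posrefm{(\catcm/X)}\to\posrefm{(\catcm/Y)}$ of \catct, and then to read each such right adjoint directly off a dependent full diagram; note that \cref{lem:wcc} cannot be invoked here, since it presupposes precisely the right adjoints we are after. So fix an arrow $f\colon Y\to X$ of \catct; by \cref{lem:deslog} it is enough to equip the functor $f^*\colon\posrefm{(\catcm/X)}\to\posrefm{(\catcm/Y)}$ induced by weak pullback along $f$ with a right adjoint. Given an object $c\colon C\to Y$ of $\catcm/Y$, I would take a dependent full diagram over the composable pair $C\xrightarrow{c}Y\xrightarrow{f}X$: an object $u\colon U\to X$, a weak pullback $V$ of the cospan $Y\xrightarrow{f}X\xleftarrow{u}U$ with legs $p_2\colon V\to Y$ and $p_1\colon V\to U$, and an arrow $g\colon V\to C$ with $cg=p_2$; this data enjoys the weak universal property of \cref{def:fulldiag}. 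Setting $\Pi_f[c]:=[u]\in\posrefm{(\catcm/X)}$, the heart of the proof is the equivalence $f^*[a]\leq[c]\liff[a]\leq[u]$, to be verified for every object $a\colon A\to X$ of $\catcm/X$; this is exactly the Galois connection $f^*\dashv\Pi_f$. Throughout I write $f^*A$, with legs $q_1\colon f^*A\to A$ and $q_2\colon f^*A\to Y$, for a weak pullback of the cospan $Y\xrightarrow{f}X\xleftarrow{a}A$, so that $q_2$ represents $f^*[a]$.

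For the left-to-right implication, $f^*[a]\leq[c]$ means there is an arrow $\sigma\colon f^*A\to C$ with $c\sigma=q_2$. I would then observe that $A$ (with $a$ as its map to $X$), the weak pullback $f^*A$, and the arrow $\sigma$ together form precisely a diagram of the shape quantified over in \cref{def:fulldiag}: the relevant square is a weak pullback by construction, and the triangle commutes because $c\sigma=q_2$. Feeding this diagram into the universal property of the dependent full diagram produces, besides further data that I discard, an arrow $h\colon A\to U$ with $uh=a$, \ie $[a]\leq[u]$.

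For the converse, from $h\colon A\to U$ with $uh=a$ I would build a comparison in the opposite direction: the pair $(hq_1,q_2)$ is a cone over $Y\xrightarrow{f}X\xleftarrow{u}U$, since $u(hq_1)=aq_1=fq_2$, so the weak universal property of $V$ yields $\tau\colon f^*A\to V$ with $p_1\tau=hq_1$ and $p_2\tau=q_2$; then $g\tau\colon f^*A\to C$ satisfies $c(g\tau)=p_2\tau=q_2$ and is thus the arrow over $Y$ witnessing $f^*[a]\leq[c]$. The two implications establish the adjunction, and that $\Pi_f$ is consequently well defined on $\leq$-classes, monotone, and independent of the chosen dependent full diagram then follows formally from being a right adjoint between posets. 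I expect the only delicate point --- bookkeeping rather than a genuine obstacle --- to be checking that the data $(A,a,f^*A,\sigma)$ really does meet the hypotheses of \cref{def:fulldiag} once its variables $Y,X,J$ have been relabelled as $C,Y,X$; after that, the argument is a short diagram chase with weak pullbacks, using in particular neither that the comparison maps are determined by projections nor anything beyond the bare existence of the comparison arrow $h$ over $X$ together with the weak universal property of $V$.
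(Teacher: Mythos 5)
Your argument is correct and is essentially the paper's own proof spelled out in detail: the paper likewise defines, via a chosen dependent full diagram over $C \xrightarrow{c} Y \xrightarrow{f} X$, a functor $\allw_f \colon \posrefm{(\catcm/Y)} \to \posrefm{(\catcm/X)}$ right adjoint to the weak pullback functor, and then concludes by \cref{lem:deslog}. Your explicit verification of the Galois connection (universal property of the dependent full diagram for one direction, the weak pullback factorisation composed with $g$ for the other) is exactly the content the paper leaves implicit.
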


\begin{proof}
Using a choice of dependent full diagrams in \catct
it is possible to define, for every $ f \colon Y \to X $ in \catct,
functors $ \allw_f \colon \posrefm{(\catcm/Y)} \to \posrefm{(\catcm/X)} $.
These functors are right adjoint to weak pullback functors by
the universal property of dependent full diagrams.
The statement follows from \cref{lem:deslog}.
\end{proof}

\begin{theor} \label{thm:lccex}
If \catct has dependent full diagrams,
then \excomct is locally cartesian closed.
\end{theor}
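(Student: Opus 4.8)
The plan is to use that an exact (more generally, finitely complete) category is locally cartesian closed exactly when all of its slices are cartesian closed, and to obtain cartesian closure of each slice of \excomct by applying \Cref{thm:ccex}. The slices over free objects are immediate: for $J$ in \catct the embedding $\exfnctm\colon\catcm\to\excomcm$ identifies $\catcm/J$ with a projective cover of the exact category $\excomcm/\exfnctm J$ (its objects are projective there, being slices of the projective objects $\exfnctm Y$, and they cover everything), so $\excomm{(\catcm/J)}\equiv\excomcm/\exfnctm J$ by~\cite[Thm.\ 16]{CarboniVitale1998}; and by \Cref{lem:fullsl} the category $\catcm/J$ has dependent full diagrams, hence full diagrams by \Cref{lem:totfull} and right adjoints to weak pullback functors by \Cref{lem:fullrxadj} and \Cref{lem:deslog}, so \Cref{thm:ccex} makes $\excomm{(\catcm/J)}\equiv\excomcm/\exfnctm J$ cartesian closed.

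For a general object $A$ of \excomct, write $\catpm_A$ for the full subcategory of $\excomcm/A$ on the objects of the form $\exfnctm Y\to A$ with $Y$ in \catct; as above this is a projective cover of $\excomcm/A$, so $\excomcm/A$ is the exact completion of $\catpm_A$ and, by \Cref{thm:ccex}, it is cartesian closed as soon as $\catpm_A$ has right adjoints to weak pullback functors and full diagrams. The first transfers at once: subobjects, inverse images and their right adjoints in $\excomcm/A$ of an object $B\to A$ are just the corresponding data for $B$ in \excomcm, so \Cref{lem:fullrxadj} yields right adjoints to inverse images in $\excomcm/A$, and \Cref{lem:deslog}, applied to the projective cover $\catpm_A$, yields right adjoints to weak pullback functors in $\catpm_A$.

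I expect the only genuinely delicate step to be showing that $\catpm_A$ has full diagrams, equivalently (by \Cref{lem:totfull}) dependent full diagrams: here \Cref{lem:fullsl} does not apply directly, since $\catpm_A$ is not a slice of \catct unless $A$ is free. I see two routes. One is to redo the bookkeeping of \Cref{lem:fullsl}: the faithful functor $\catpm_A\to\catcm$ sending $\exfnctm Y\to A$ to $Y$ preserves and reflects weak pullbacks, and one checks that it transfers dependent full diagrams along the same lines. The other is to pass through condition F of~\cite{vdBergMoerdijk2008}, as in \Cref{rem:axF}: the hypothesis makes \excomct satisfy F, and F, being stable under slicing, then holds in $\excomcm/A$; since F in a regular category with enough projectives is equivalent to its projective cover having dependent full diagrams, $\catpm_A$ has them. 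Either way \Cref{thm:ccex} applies and $\excomcm/A$ is cartesian closed for every $A$, so \excomct is locally cartesian closed. As in the proof of \Cref{thm:ccex}, the whole argument can presumably be phrased more economically by expressing the hypotheses of \Cref{thm:ccex} in the terminology of~\cite{Emmenegger2018} and citing the characterisation of locally cartesian closed exact completions obtained there.
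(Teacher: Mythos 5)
Your proposal is correct in outline and coincides with the paper's argument for slices over free objects (identify $\catcm/J$ with a projective cover of $\excomcm/\exfnctm J$, then combine \Cref{lem:fullsl,lem:totfull,lem:fullrxadj,lem:deslog} with \Cref{thm:ccex}), but it diverges at the passage to an arbitrary object $A$ of \excomct. The paper stops after the free slices and obtains cartesian closure of $\excomcm/A$ by ``descending'' along a projective cover $\exfnctm X \repi A$, importing the descent argument of Theorem 3.6 of~\cite{Emmenegger2018}; it never needs to know that a projective cover of $\excomcm/A$ has full diagrams. You instead work directly with the projective cover $\catpm_A$ of $\excomcm/A$ on the objects $\exfnctm Y \to A$ and re-apply \Cref{thm:ccex} there, which is a legitimate and more self-contained route, but it makes the transfer of dependent full diagrams to $\catpm_A$ the crux, exactly as you flag. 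Your first route for that step does go through: since $\exfnctm$ is fully faithful and every arrow of $\catpm_A$ lies over $A$, a dependent full diagram in \catcm over the underlying arrows acquires canonical structure maps to $A$ (such as $U \to J \to A$ and $V \to X \to A$), weak pullbacks in $\catpm_A$ are computed as in \catcm, and the arrows $h$, $k$ and the weak pullback $P$ produced by the universal property in \catcm are automatically over $A$ thanks to the commutativities required in \cref{def:fulldiag}; so the bookkeeping of \Cref{lem:fullsl} extends, though it deserves to be written out rather than asserted. Your second route is weaker as stated: stability of condition F under slicing is claimed without proof (and is not part of \cref{rem:axF}), and the equivalence of F with the existence of dependent full diagrams in a projective cover is itself only asserted in that remark, so this should be regarded as a heuristic. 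In sum, the paper's descent step buys brevity at the cost of citing~\cite{Emmenegger2018}, while your route stays within the lemmas of the present paper at the cost of one extra (but manageable) transfer argument.
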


\begin{proof}
It only remains to put together the previous results.
\Cref{lem:fullrxadj} ensures that \excomct has right adjoints to inverse images,
whereas \cref{lem:fullsl,lem:totfull} imply that
$ \catcm/X $ has full diagrams for every $ X $ in \catct.
Hence \cref{thm:ccex} yields the cartesian closure of $ \excomcm/(\exfnctm X) $.
The general statement follows now ``descending'' along a cover
$\exfnctm X \repi A$ as in the proof of
Theorem 3.6 in~\cite{Emmenegger2018}.
\end{proof}

We again collect together the results of this section.

\begin{corol} \label{cor:lccwrap}
Consider the following.
\begin{enumerate}
\item \label{lccwrap:full}
\catct has dependent full diagrams.
\item \label{lccwrap:lcc}
\excomct is locally cartesian closed.
\item \label{lccwrap:wlcc}
\catct has weak dependent products.
\end{enumerate}
We have \ref{lccwrap:full} $ \limpd $ \ref{lccwrap:lcc} $ \limpd $ \ref{lccwrap:wlcc}.
If \catct has pullbacks,
then \ref{lccwrap:wlcc} $ \limpd $ \ref{lccwrap:full}.
\end{corol}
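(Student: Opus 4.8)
The implications \ref{lccwrap:full} $\limpd$ \ref{lccwrap:lcc} and, under the additional hypothesis of pullbacks, \ref{lccwrap:wlcc} $\limpd$ \ref{lccwrap:full} are already at hand: the former is \cref{thm:lccex}, and the latter is immediate from the lemma relating weak dependent products with dependent full diagrams (stated just before \cref{lem:totfull}), since a weak dependent product of $y,x$ is in particular a dependent full diagram over $y,x$. So the only implication that needs an argument is \ref{lccwrap:lcc} $\limpd$ \ref{lccwrap:wlcc}. The plan is to build a weak dependent product in \catct by covering, with objects of the projective cover \catct, the genuine dependent product that local cartesian closure furnishes in \excomct --- exactly in the spirit of the passage from exponentials to weak exponentials recorded at the start of \cref{sec:fulldiag}.

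Concretely, I would regard \catct as a projective cover of \excomct via the canonical embedding and fix composable arrows $Y \xrightarrow{y} X \xrightarrow{x} J$ in \catct. Using that \excomct is locally cartesian closed, form the dependent product of $y$ along $x$; write $\Pi$ for its total object, $\pi \colon \Pi \to J$ for its structure map, and $\epsilon \colon X\times_J\Pi \to Y$ for the counit (so $y\epsilon$ is the first projection). First cover $\Pi$ by a projective, $w \colon U \repi \Pi$, and set $u := \pi w$; then cover the fibre product $X\times_J U$ formed in \excomct by a projective, $v \colon V \repi X\times_J U$, and let $p_2 \colon V \to X$ and $p_1 \colon V \to U$ be the two composite projections. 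Since the comparison arrow from the cone $(V,p_1,p_2)$ to $X\times_J U$ is $v$, hence a regular epi, the weak-limit criterion recalled in \cref{sec:fulldiag} shows that $V$ is a weak pullback of $x$ and $u$ in \catct; and as $f := \epsilon\circ(X\times_J w)\circ v \colon V \to Y$ factors through that same cover, $f$ is determined by projections. One has $yf = p_2$, so these data assemble into a commutative diagram of the shape \eqref{depfulldiag}.

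It remains to check the weak universal property. Let $u'\colon U'\to J$, a weak pullback $V'$ of $x,u'$ with projections $p_1',p_2'$, and an arrow $f'\colon V'\to Y$ determined by projections with $yf'=p_2'$ form a competing diagram. As $f'$ is determined by projections it factors through the cover $V' \repi U'\times_J X$, and since $yf'$ is the second projection the induced arrow $\bar{f'}\colon U'\times_J X \to Y$ is a morphism over $X$ from the pullback of $(U',u')$ along $x$ to $(Y,y)$; by the universal property of the dependent product it transposes to an arrow $\widetilde{f'}\colon U'\to\Pi$ over $J$. Since $U'$ is projective and $w$ is a regular epi, $\widetilde{f'}$ lifts to $h\colon U'\to U$ with $wh=\widetilde{f'}$, whence $uh=u'$; pairing $p_2'$ with $hp_1'$ then gives an arrow $V'\to X\times_J U$ which, $V'$ being projective, lifts along $v$ to some $k\colon V'\to V$. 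A direct diagram chase, using that $\widetilde{f'}$ transposes to $\bar{f'}$ and that $f$ is determined by projections, shows that $h$ and $k$ make the obvious diagram commute --- in particular $fk=f'$ --- so $(U,u,V,p_1,p_2,f)$ is a weak dependent product of $y,x$.

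I expect the only real difficulty to lie in this last verification: one has to push the \emph{arbitrary} competitor $f'$ through the dependent-product adjunction and then pull it back along the two covers $w$ and $v$, and afterwards reconcile the resulting $h,k$ with every structure map, especially $fk=f'$. The hypothesis that $f'$ be determined by projections is precisely what makes the first move possible --- it lets $f'$ descend to the genuine fibre product $U'\times_J X$ and hence be seen by the adjunction --- and without it one would recover only a dependent full diagram, not a weak dependent product. A subsidiary point worth spelling out, invoked twice above, is the determined-by-projections criterion for an arrow out of a weak \emph{pullback} rather than a weak product; this follows from the weak-product criterion recalled in \cref{sec:fulldiag} by slicing over $J$, where a weak pullback of $x,u$ is a weak product of $X\to J$ and $U\to J$ and $X\times_J U$ is their binary product.
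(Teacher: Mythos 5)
Your proposal is correct and follows the paper's route: \ref{lccwrap:full} $\limpd$ \ref{lccwrap:lcc} is \cref{thm:lccex}, \ref{lccwrap:wlcc} $\limpd$ \ref{lccwrap:full} under pullbacks is the lemma preceding \cref{lem:totfull}, and your covering argument for \ref{lccwrap:lcc} $\limpd$ \ref{lccwrap:wlcc} is exactly the dependent analogue of the remark in \cref{sec:fulldiag} (cover the genuine dependent product and its fibre product with projectives, then transpose and lift competitors along the covers), which is what the corollary implicitly relies on. The diagram chase you defer does go through via the triangle identity, and your handling of ``determined by projections'' for weak pullbacks by slicing over $J$ is the right justification.
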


\section{Full diagrams in homotopy categories}
\label{sec:hocat}

In this section we show that, under mild assumptions on a model category \catmt,
the homotopy category \hocatmt has dependent full diagrams
if \catmt has weak dependent products.
Using well-known results,
this implies that the exact completion of the homotopy categories on spaces and CW-complexes
yields locally cartesian closed pretoposes.

$ \overset{\text{\tiny $\sim$}}{\longrightarrow} $
For basic notions on model categories and homotopical algebra we refer to~\cite{Hovey1999}.
Fibrations, cofibrations and weak equivalences are denoted as $ \fibr $,
$ \cofibr $
and $ \wequiv $, respectively.
A path object factorisation for an object $ A $ is denoted as
$ A \wequivand{\cofibr} P A \fibr A \times A $,
and a fibrewise path object factorisation for a fibration $ p \colon A \fibr B $
as $ A \wequivand{\cofibr} P_p A \fibr A \times_B A $.
Since we shall not be concerned here with cylinder objects,
we say that two arrows $ f,g \colon C \to A $ in \catmt are homotopic,
written $ f \htpc g  $, if they are right homotopic
(\ie homotopic \wrt the path object $ P A $).
Similarly, we shall write $ f \htpc_p g $
to mean that they are fibrewise right homotopic over the fibration $ p $.
Note that, since every fibration $p \colon A \fibr B$
is fibrant in the model category structure on $\catmm/B$
induced by that one on \catmt,
fibrewise (right) homotopy is an equivalence relation
on arrows $f,f' \colon X \to A$ such that $p f = p f'$.
If every object in \catmt is cofibrant,
then it is also a congruence.

When every object in a model category \catmt is cofibrant,
the homotopy category \hocatmt is equivalent to the category
obtained quotienting the full subcategory $ \catmm_f $ of \catmt on fibrant objects
by the homotopy relation~\cite{Hovey1999}.
Moreover, \fibcatmt is a category of fibrant objects
in the sense of Brown~\cite{Brown1973}
where, in addition,
every acyclic fibration has a section and where
weak equivalences and homotopy equivalences coincide.
A category of fibrant objects satisfying these additional properties
is called a \emph{path category} by van den Berg and Moerdijk~\cite{vdBergMoerdijk2018}.
More explicitly, a path category may be axiomatised as follows (cf.\ \cite{vdBergMoerdijk2018}).
It has a terminal object and
two classes of distinguished arrows closed under isomorphism and composition,
called weak equivalences and fibrations, such that:
\textit{(i)} weak equivalences are closed under 2-out-of-6,
\textit{(ii)} terminal arrows are fibrations,
\textit{(iii)} pullbacks along fibrations exist and
(acyclic) fibrations are stable under pullback, and
\textit{(iv)} every acyclic fibration has a section.

\begin{defin}[\cite{vdBergMoerdijk2018}, Definition 5.2] \label{def:hwdepprod}
Let $ g \colon B \fibr A $ and $ f \colon A \fibr I $ be two fibrations in a path category \catct. 
A commuting diagram
\[
\xycenterm[C=3em]{
B	\xyfibr{}[dr]_g
&	U \times_I A	\xys{}[l]_-e \xyfibr{}[d] \xyfibr{}[r]
&	U	\xyfibr{}[d]^u
\\
&	A	\xyfibr{}[r]_f	&	I	}
\]
is a \emph{homotopy weak dependent product of $ f $ and $ g $}
if for every such commutative diagram $ u' \colon U' \fibr I $, $ e' \colon U' \times_I A \to B $,
there is $ k \colon u' \to u $ over $ I $ such that $ e (k \times A) \htpc_g e' $.
\end{defin}

Homotopy weak dependent products are called
weak homotopy $\Pi$-types in~\cite{vdBergMoerdijk2018}.
As observed in~\cite{vdBergMoerdijk2018}, 
the (weak) universal property also holds when the arrow $ u' $ is not a fibration.

When the path category is \fibcatmt,
a weak homotopy dependent product arises as fibrant replacement
of a weak dependent product in \catmt.
To prove this fact we need the following result,
which is a reformulation for a model category
of Theorem 2.38 from~\cite{vdBergMoerdijk2018}.

\begin{theor} \label{thm:hdf}
Let \catmt be a model category 
and let $A$ and $B$ be cofibrant objects.
Then every commutative square
\begin{equation} \label{eq:hdf}
\xycenterm{
A	\xys{}[d]^\xywequiv_f \xys{}[r]^k
&	C	\xyfibr{}[d]^g
\\
B	\xys{}[r]_l
& 	D	}
\end{equation}
has a homotopy diagonal filler,
\ie an arrow $ d \colon B \to C $ such that $ g d = l $ and $ d f \htpc_g k $.
Moreover, such a filler is unique up to fibrewise homotopy over $ g $.
\end{theor}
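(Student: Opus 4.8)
The plan is to reduce the statement to the standard lifting properties of a model category, but carried out at the level of homotopy rather than on the nose, using the interplay between the path object on $C$ over $g$ and factorisations. First I would replace the weak equivalence $f \colon A \wequiv B$ between cofibrant objects by a cofibration: factor $f = q \circ i$ with $i \colon A \cofibr A'$ an acyclic cofibration and $q \colon A' \fibr B$ an acyclic fibration. Since $A$ and $B$ are cofibrant and $q$ is an acyclic fibration, $q$ has a section $s \colon B \cofibr A'$ (which is then an acyclic cofibration because $q s = \id_B$ and $q$ is a weak equivalence, by 2-out-of-3), and $s$ exhibits $B$ as a strong deformation retract of $A'$ over nothing in particular — more precisely, $\id_{A'}$ and $s q$ are right homotopic rel $A$ in an appropriate sense. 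The point of this step is that once $f$ is replaced by an honest acyclic cofibration, the square~\eqref{eq:hdf} admits a genuine diagonal filler $d_0 \colon B \to C$ with $g d_0 = l$ and $d_0 f = k$ strictly, by the lifting axiom (acyclic cofibration against fibration). I would then need to transport this back along $q$ and $s$ to produce the homotopy filler for the original square.

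Concretely, after factoring, I get $k' \colon A' \to C$ lifting $(k \circ (\text{retraction}), l \circ q)$ — actually the cleaner route is: precompose the top of~\eqref{eq:hdf} appropriately. Let me restructure: apply the lifting axiom to the square with left map $i \colon A \cofibr A'$ (acyclic cofibration), right map $g \colon C \fibr D$, top $k \colon A \to C$, bottom $l \circ q \colon A' \to D$; this yields $\tilde k \colon A' \to C$ with $\tilde k i = k$ and $g \tilde k = l q$. Now set $d := \tilde k \circ s \colon B \to C$. Then $g d = g \tilde k s = l q s = l$, as required. It remains to check $d f \htpc_g k$, i.e. $\tilde k s f \htpc_g k = \tilde k i$. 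Since $s f = s q i \colon A \to A'$ and $\id_{A'} i = i$, it suffices to show $s q \htpc i$-compatibly, i.e. that $sq$ and $\id_{A'}$ are right homotopic via a homotopy that restricts suitably along $i$; this is the standard fact that an acyclic fibration with a section gives a deformation retraction, proved by lifting in the square with left map $A' \amalg_A A' \to A' \times A'$... — actually more simply, since $q$ is an acyclic fibration and $s q, \id_{A'}$ are both sections of $q$ up to the point that $q(sq) = q = q\,\id_{A'}$, they are fibrewise homotopic over $q$ (because $C \to D$... no): use that $A'$ is cofibrant and $P_q A' \fibr A' \times_B A'$ is an acyclic fibration, so the two maps $A' \to A' \times_B A'$ given by $(\id, sq)$ lift to a fibrewise path, and this path restricts along $i$ to the constant path since $sqi = sf = i$ when... hmm, $sqi = sf$, and $sf$ need not equal $i$. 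So I would instead choose the homotopy between $sq$ and $\id_{A'}$ to be constant on the image of $i$ up to homotopy, which is automatic after composing with $g$ and using that $g\tilde k = lq$ coequalises appropriately. Applying $\tilde k$ to this fibrewise path over $q$ and noting $g \tilde k$ factors through $q$, the resulting path in $C$ lies over $D$ via $l q$, hence is a fibrewise homotopy over $g$; this gives $d f = \tilde k s f = \tilde k s q i \htpc_g \tilde k i = k$.

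For the uniqueness clause I would argue as follows: suppose $d, d' \colon B \to C$ both satisfy $g d = g d' = l$ and $d f \htpc_g k \htpc_g d' f$; then $d f \htpc_g d' f$. Pick a fibrewise path object $C \wequivand{\cofibr} P_g C \fibr C \times_D C$. A fibrewise homotopy $d f \htpc_g d' f$ is a map $A \to P_g C$; I want to extend it along $f \colon A \wequiv B$ to a map $B \to P_g C$ witnessing $d \htpc_g d'$. Again replace $f$ by the acyclic cofibration $i$ after factoring, and set up the lifting square with left map $i \colon A \cofibr A'$, right map $P_g C \fibr C \times_D C$ (acyclic fibration, since $g$ is a fibration so $P_g C \fibr C\times_D C$ is acyclic), top the given fibrewise homotopy composed with $A \xrightarrow{?}$... one needs the bottom map $A' \to C \times_D C$ to be $(\tilde k, \tilde{k'})$ for suitable extensions $\tilde k, \tilde{k'}$ of $d f, d' f$ along $i$, and for the square to commute one needs $d f, d' f$ to extend compatibly — here I use that $d, d'$ themselves precomposed with $q \colon A' \fibr B$ give such extensions on $A'$, and commutativity with the given homotopy on $A$ follows by the same deformation-retract trick as above. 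The lift $A' \to P_g C$ then precomposes with $s \colon B \cofibr A'$ to give the desired fibrewise homotopy $d \htpc_g d'$.

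The main obstacle I anticipate is the bookkeeping in these two steps of ``transport along the section and retraction'': matching up the on-the-nose diagonal filler obtained over $A'$ with the original data over $A$ requires keeping careful track of which homotopies are fibrewise over which map, and in particular verifying that the deformation retraction of $A'$ onto $B$ can be chosen so that applying $\tilde k$ (resp.\ the path lift in the uniqueness part) lands in a \emph{fibrewise} homotopy over $g$ — this hinges on the identity $g\tilde k = l q$, which forces all the relevant homotopies in $C$ to project to the constant homotopy on $D$ after composing with $g$, hence to be fibrewise over $g$. Once that observation is isolated, the rest is a routine assembly of the model-category axioms (factorisation, lifting, 2-out-of-3, existence of sections of acyclic fibrations on cofibrant objects, and the standard properties of fibrewise path objects recalled before the theorem). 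Alternatively, one can short-circuit much of this by working throughout in the model category $\catmm/D$: there $g$ is fibrant, $A$ and $B$ remain cofibrant (every object of $\catmm$ being cofibrant, or more directly because $A, B$ are cofibrant in $\catmm$ and the forgetful functor creates cofibrations), $f$ is still a weak equivalence, and the statement becomes exactly the assertion that any square from a weak equivalence between cofibrant objects into a fibrant object has a filler unique up to homotopy — a form of the standard fact that mapping into a fibrant object sends weak equivalences between cofibrant objects to bijections on homotopy classes — applied with ``homotopy'' in $\catmm/D$ being precisely fibrewise homotopy over $g$. I would present the slice-category version as the main line of argument, since it makes the correspondence with Theorem 2.38 of~\cite{vdBergMoerdijk2018} transparent, and relegate the explicit factor-and-transport computation to a remark.
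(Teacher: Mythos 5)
Your proposal is correct in substance, but it follows a genuinely different route from the paper. The paper never factors $f$ itself: it factors the gap map $\pbkun{f,k} \colon A \to B \times_D C$ as an acyclic cofibration $c \colon A \to E$ followed by a fibration $p$, notes that $\pi_1 p \colon E \fibr B$ is an acyclic fibration (\twothree) and hence has a section $s$ since $B$ is cofibrant, and sets $d = \pi_2 p s$; this first yields only a \emph{lower} filler ($gd = l$), and the two homotopies ($df \htpc_g k$ and uniqueness) are then obtained by applying the same lower-filler construction twice more, to the fibrewise path objects $P_{\pi_1 p}E$ and $P_g C$. The point of this design, as the remark following the theorem explains, is that it uses no lifting against cofibrations and so transfers verbatim to path categories, reproving Theorem 2.38 of van den Berg--Moerdijk. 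Your main line instead works in $\catmm/D$, where $(C,g)$ is fibrant, $(A,lf)$ and $(B,l)$ are cofibrant, $f$ is a weak equivalence, and homotopy over $(C,g)$ is exactly fibrewise homotopy over $g$; the statement then reduces to the standard fact that a weak equivalence of cofibrant objects induces a bijection on homotopy classes of maps into a fibrant object. This is a clean, citable argument (and your explicit factor-and-transport version, factoring $f = q i$ and lifting $k$ along the acyclic cofibration $i$ against $g$, also works), but it leans on the full model structure and would not adapt to path categories the way the paper's proof does. Two local corrections to your write-up: the claim that $P_g C \fibr C \times_D C$ is an \emph{acyclic} fibration is false in general (it is only a fibration; your parallel claim for $P_q A'$ is fine precisely because $q$ is acyclic), but this is harmless since in that square your left leg $i$ is an acyclic cofibration, so the lift exists anyway; and the worries in your existence step are unnecessary --- you do not need the homotopy $sq \htpc_q \id_{A'}$ to be constant along $i$, only to push it forward along $\tilde k$ via the standard lift $P_q A' \to P_g C$ of $\tilde k \times \tilde k$ (using $g\tilde k = lq$), and in the uniqueness step the commutativity you fret over is automatic because $qi = f$.
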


\begin{proof}
We shall first show that every commuting square~\eqref{eq:hdf} has a lower filler,
\ie an arrow $ d $ such that $ g d = l $.
This fact, in turn, allows us to obtain a homotopy witnessing the fact that
the previously constructed lower filler is in fact a homotopy diagonal filler,
and another homotopy witnessing its uniqueness.

Consider a factorisation of $ \pbkun{f,k} \colon A \to B \times_D C $
into an acyclic cofibration $ c \colon A \wequivand{\cofibr} E $
followed by a fibration $ p \colon E \fibr B \times_D C $.
In particular, $E$ is cofibrant.
From \twothree we obtain that $ \pi_1 p \colon E \fibr B $ is an acyclic fibration and,
since $ B $ is cofibrant, it has a section $ s \colon B \wequiv E $.
But then $ d \coloneqq \pi_2 p s \colon B \to C $ is the required lower filler,
as $ g d = l \pi_1 p s = l $.

Therefore every commuting square~\eqref{eq:hdf} has a lower filler.
In particular, we obtain a homotopy $ s \pi_1 p \htpc_{(\pi_1 p)} \id_X $
as a lower filler in
\[
\xycenterm[C=3em]{
B	\xys{}[d]^-\xywequiv_-s \xys{}[r]
&	P_{\pi_1 p} E	\xyfibr{}[d]
\\
E	\xys{}[r]_-{\pbkun{s \pi_1 p, \id_X}}
&	E \times_B E	}
\]
where the top horizontal arrow is $ s $ followed by reflexivity of $ P_{\pi_1 p} E $.
Since $E$ is cofibrant,
it is $ d f = \pi_2 p (s \pi_1 p) c \htpc_g \pi_2 p c = k $.

Finally, given another homotopy diagonal filler $ d' $,
the homotopy witnessing $ d \htpc_g d' $ is obtained as a lower filler in
\[
\xycenterm[C=3em]{
A	\xys{}[d]^\xywequiv_f \xys{}[r]
&	P_g C	\xyfibr{}[d]
\\
B	\xys{}[r]_-{\pbkun{d,d'}}
&	C \times_D C	}
\]
where the top horizontal arrow is the concatenation $ d f \htpc_g k \htpc_g d' f $.
\end{proof}

\begin{rem}
The argument used in the previous proof can be adapted to work in a path category,
so to provide an alternative proof of Theorem 2.38 in~\cite{vdBergMoerdijk2018}.
To this aim it is enough to observe that, in a path category,
the existence of lower fillers for commutative squares as in~\eqref{eq:hdf}
is enough to derive that the homotopy relation is a congruence,
as in the proof of Theorem 2.14 in~\cite{vdBergMoerdijk2018}.
\end{rem}

\begin{corol}
Let \catmt be a right proper model category where every object is cofibrant.
If \catmt has weak dependent products,
then \fibcatmt has homotopy weak dependent products for every pair of composable fibrations.
\end{corol}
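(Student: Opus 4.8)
The plan is to start from a weak dependent product of two composable fibrations $g\colon B\fibr A$ and $f\colon A\fibr I$ in \catmt, which exists by hypothesis, and to argue that a fibrant replacement of its apex (carried out over $I$, so as to keep the structural arrow a fibration) yields a homotopy weak dependent product in \fibcatmt. Concretely, let
\begin{equation*}
\xycenterm[C=3em]{
B & V \xys{}[l]_-{f_0} \xys{}[d] \xys{}[r] & U_0 \xys{}[d]^-{u_0}\\
& A \xys{}[r]_-f & I}
\end{equation*}
be a weak dependent product, so the square is a weak pullback, $f_0$ is determined by projections, and the universal property holds. First I would factor $u_0\colon U_0\to I$ as an acyclic cofibration $U_0\wequivand{\cofibr}U$ followed by a fibration $u\colon U\fibr I$; since $U_0$ is cofibrant (all objects are) and hence so is $U$, this is legitimate. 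I then form the pullback $U\times_I A$ along $f$ and produce the candidate structure map $e\colon U\times_I A\to B$. Because \catmt is right proper, the comparison map $V\to U_0\times_I A$ from the weak pullback into the actual pullback, composed with $U_0\times_I A\to U\times_I A$, is a weak equivalence after suitable replacement; the point is that right properness guarantees that pulling back the weak equivalence $U_0\wequiv U$ along the fibration $f$ (or rather along $U\times_I A\fibr U$) preserves it, so $U_0\times_I A\wequiv U\times_I A$, and along this we transport $f_0$ to get $e$ up to homotopy.

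The heart of the argument, and the step I expect to be the main obstacle, is verifying the homotopy weak universal property of $e$ in the sense of \cref{def:hwdepprod}. Given a competitor $u'\colon U'\fibr I$ with $e'\colon U'\times_I A\to B$, I would first use the ordinary (strict) universal property of the weak dependent product to obtain an arrow $U'\to U_0$ over $I$ and an arrow $U'\times_I A\to V$ making the relevant diagram commute, then postcompose with $U_0\wequivand{\cofibr}U$ to get $k\colon u'\to u$ over $I$. The nontrivial part is checking that $e(k\times A)\htpc_g e'$ as arrows into the fibration $g\colon B\fibr A$. Here I would invoke \cref{thm:hdf}: the comparison between $e(k\times A)$ and $e'$ does not hold on the nose because $e$ was only defined up to homotopy via the transport along $U_0\times_I A\wequiv U\times_I A$ and because $f_0$ being merely determined by projections means the lift through $V$ is not unique. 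Setting up a commutative square of the form~\eqref{eq:hdf} with left vertical leg the weak equivalence $U'\times_I A \to (\text{a replacement})$, top arrow built from $e'$, and right vertical leg $g$, the homotopy diagonal filler provided by \cref{thm:hdf} is exactly the fibrewise homotopy $e(k\times A)\htpc_g e'$ we need. The uniqueness-up-to-fibrewise-homotopy clause of \cref{thm:hdf} ensures this is coherent and independent of the choices made.

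A few bookkeeping points remain. I would note that $e$ need not literally be a fibration — this is fine, since \cref{def:hwdepprod} only asks for a commuting diagram with $u$ a fibration and the two projections fibrations (which they are, as pullbacks of $f$ and $u$). I would also record, following the remark after \cref{def:hwdepprod}, that the universal property need only be checked against fibrant $U'$ but in fact extends to arbitrary competitors, so no generality is lost by the replacement step. Finally, the passage from ``weak equivalences are preserved under the relevant pullbacks'' to the concrete diagram chase is where right properness is used essentially and nowhere else; without it the transported structure map $e$ would not compare correctly to $e'$, since the weak pullback $V$ would fail to see the replacement $U_0\wequiv U$. Assembling these pieces gives the homotopy weak dependent product, and since $f,g$ were an arbitrary pair of composable fibrations, the corollary follows.
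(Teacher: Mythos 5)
Your overall construction is the one the paper uses: since \catmt has genuine pullbacks, the weak dependent product can be taken with the actual pullback as its square (equivalently, one replaces $f_0\colon V\to B$ by the map $d\colon U_0\times_I A\to B$ through which it factors); one then factors $u_0$ as an acyclic cofibration $c\colon U_0\wequivand{\cofibr}U$ followed by a fibration $u\colon U\fibr I$, uses right properness to see that $U_0\times_I A\to U\times_I A$ is a weak equivalence, obtains $e\colon U\times_I A\to B$ as a homotopy diagonal filler via \cref{thm:hdf}, and, given a competitor $(u',e')$, uses the strict universal property of the weak dependent product to get $k_0\colon U'\to U_0$ over $I$ and sets $k=ck_0$. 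One small correction on the way there: the comparison $V\to U_0\times_I A$ from the weak pullback to the pullback is only a split epimorphism, not a weak equivalence (even ``after suitable replacement''); what you actually need is that $f_0$, being determined by projections, factors through this comparison, and it is this factorisation, not right properness, that lets you pass from $V$ to $U_0\times_I A$ before right properness is invoked.

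The genuine gap is in your verification of $e(k\times A)\htpc_g e'$. The square you propose for a second application of \cref{thm:hdf}, with ``left vertical leg the weak equivalence $U'\times_I A\to(\text{a replacement})$'', does not exist: there is no relevant weak equivalence out of $U'\times_I A$, and no second diagonal filler is needed. The fibrewise homotopy is already part of the data of $e$: being a homotopy diagonal filler of the square of shape~\eqref{eq:hdf} with left leg the weak equivalence $U_0\times_I A\to U\times_I A$, top arrow $d$ and right leg $g$, it satisfies $e\circ(U_0\times_I A\to U\times_I A)\htpc_g d$, and precomposing this fibrewise homotopy with $k_0\times A$ (fibrewise homotopies are stable under precomposition) gives $e(ck_0\times A)\htpc_g d(k_0\times A)$. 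Moreover, the discrepancy you attribute to the non-uniqueness of the lift through $V$ disappears once the square is the genuine pullback: the strict universal property then gives $d(k_0\times A)=e'$ on the nose, because the second arrow supplied by that universal property is forced, by the universal property of the pullback $U_0\times_I A$, to be $k_0\times A$. This one-line conclusion is exactly how the paper finishes; as you correctly note, it is harmless that $e$ is not a fibration and that the universal property extends to non-fibration competitors.
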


\begin{proof}
Let $ f \colon A \fibr I $ and $ g \colon B \fibr A $ be two fibrations in \fibcatmt and let
$ w \colon W \to I $, $ d \colon W \times_I A \to B $ be a weak dependent product of them.
Factor $ w $ as an acyclic cofibration $ c \colon W \wequivand{\cofibr} U $ followed by a fibration $ u \colon U \fibr I $.
Since \catmt is right proper, $ W \times_I A \to U \times_I A $ is also a weak equivalence,
hence we obtain $ e \colon U \times_I A \to B $ as homotopy diagonal filler.

The required universal property is depicted in the diagram below
\[
\xycenterm[C=2.5em@R=1.5em]{
&&	U' \times_I A	\xydot{|-{k \times A}}[dl]
			\xys{}@/_3em/[ddll]_(.7){e'}
			\xyfibr{|!{[dl];[dr]}\hole|!{[ddl];[dd]}\hole}@/^1em/[dddl]
			\xyfibr{}[rr]
&&	U'	\xydot{|-k}[dl] \xyfibr{}@/^1em/[dddl]^-{u'}
\\
&	W \times_I A	\xys{}[dl] \xys{}[d]^\xywequiv \xyfibr{}[rr]
&&	W	\xycofibr{}[d]^\xywequiv_-c
&\\
B	\xyfibr{}[dr]_-g
&	U \times_I A	\xys{|-{\,e\,}}[l] \xyfibr{}[d] \xyfibr{}[rr]
&&	U	\xyfibr{}[d]_-u
&\\
&	A	\xyfibr{}[rr]_-f	&&	I	&}
\]
where $ e (ck \times A) \htpc_g e' $ since $ e $ is just a homotopy diagonal filler.
\end{proof}

Homotopy weak dependent products also enjoy another universal property
with respect to certain homotopy diagrams.
This is proved below in \cref{lem:wdepprodhfull}
and it is a consequence of the following result.

\begin{prop}[\cite{vdBergMoerdijk2018}, Proposition 2.31] \label{prop:strict}
Let \catct be a path category and let
\[
\xycenterm{
						&	A	\xyfibr{}[d]^f	\\
	C	\xys{}[r]_g \xys{}[ur]^k	&	B			}
\]
be a diagram that commutes up to homotopy.
Then there is $ k' \colon C \to A $ such that $ k' \htpc k $ and $ f k' = g $.
\end{prop}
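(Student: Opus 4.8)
The plan is to lift the homotopy $fk \htpc g$ along the fibration $f$, starting at $k$, and to take as $k'$ the other endpoint of the lifted homotopy. First I would fix a path object $B \wequiv PB \fibr B \times B$ for $B$, with reflexivity $r \colon B \wequiv PB$ (a common section of the two endpoints) and endpoints $\partial_0, \partial_1 \colon PB \fibr B$, and choose a homotopy $H \colon C \to PB$ with $\partial_0 H = fk$ and $\partial_1 H = g$; this is possible since $\htpc$ is symmetric. Note that $\partial_0$ is an acyclic fibration, being a fibration with the weak equivalence $r$ as a section. Form the pullback $Q \coloneqq A \times_B PB$ of $f$ along $\partial_0$, with projections $q_A \colon Q \to A$ and $q_P \colon Q \to PB$; then $q_A$ is an acyclic fibration (a base change of $\partial_0$) and $q_P$ is a fibration (a base change of $f$), and, since $fk = \partial_0 H$, the pair $(k,H)$ induces $\pbkun{k,H} \colon C \to Q$ with $q_A \pbkun{k,H} = k$ and $q_P \pbkun{k,H} = H$.

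The key input is the homotopy lifting property of fibrations, standard in a path category (\cite{vdBergMoerdijk2018}). Choosing a path object $PA$ for $A$ with endpoints $\partial^A_0, \partial^A_1$ and letting $Pf \colon PA \to PB$ be the induced map of path objects (so $\partial^B_i Pf = f \partial^A_i$), that property says that the comparison arrow $\lambda \coloneqq \pbkun{\partial^A_0, Pf} \colon PA \to Q$ is an acyclic fibration, hence has a section $\Lambda \colon Q \to PA$. Set $T \coloneqq \partial^A_1 \Lambda \colon Q \to A$. From $q_A \lambda = \partial^A_0$, $q_P \lambda = Pf$ and $\lambda \Lambda = \id_Q$ one reads off $\partial^A_0 \Lambda = q_A$ and $Pf \Lambda = q_P$, whence $fT = \partial^B_1 Pf \Lambda = \partial_1 q_P$; moreover $\Lambda \colon Q \to PA$ is itself a homotopy from $\partial^A_0 \Lambda = q_A$ to $\partial^A_1 \Lambda = T$, so $q_A \htpc T$.

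It then suffices to take $k' \coloneqq T \pbkun{k,H} \colon C \to A$. Indeed $fk' = \partial_1 q_P \pbkun{k,H} = \partial_1 H = g$, while $\Lambda \pbkun{k,H} \colon C \to PA$ is a homotopy from $q_A \pbkun{k,H} = k$ to $T \pbkun{k,H} = k'$, so $k \htpc k'$. I expect the main obstacle to be the fact invoked in the second step, that $\lambda$ is an acyclic fibration: that $\lambda$ is a weak equivalence is immediate from \twothree applied to the two maps into $A$, but showing it is a fibration takes a short separate argument (exhibiting $\lambda$ as a suitable map over $A \times B$), which is standard (\cite{vdBergMoerdijk2018}). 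If instead one prefers to stay within the tools already in place, the same $k'$ can be produced from the path-category form of \cref{thm:hdf} (cf.\ the remark just after it): the section $\sigma \coloneqq \pbkun{\id_A, rf}$ of $q_A$ is a weak equivalence, it fits into a commutative square whose right-hand leg is the fibration $f$ and whose bottom arrow is $\partial_1 q_P$ (commutativity being $\partial_1 q_P \sigma = \partial_1 rf = f$), and any homotopy diagonal filler $\tau \colon Q \to A$ of that square satisfies $f\tau = \partial_1 q_P$ and $\tau\sigma \htpc \id_A$; then $k' \coloneqq \tau \pbkun{k,H}$ works too, using in addition that $\sigma q_A \htpc \id_Q$ and that $\htpc$ is a congruence, both standard for path categories.
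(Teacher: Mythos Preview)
The paper gives no proof of this proposition; it is simply quoted from \cite{vdBergMoerdijk2018} and used as a black box. Your argument is essentially the standard one: pull the path object $PB$ back along the fibration $f$ to form $Q = A \times_B PB$, invoke the homotopy lifting property of $f$ (that the comparison $\lambda = \pbkun{\partial^A_0, Pf} \colon PA \to Q$ is an acyclic fibration), and use a section $\Lambda$ of $\lambda$ to transport $k$ along the given homotopy $H$. The bookkeeping you give for $fk' = g$ and $k \htpc k'$ is correct, and you rightly isolate the claim that $\lambda$ is an acyclic fibration as the one non-formal step requiring input from \cite{vdBergMoerdijk2018}.

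Your alternative route via the path-category form of \cref{thm:hdf} (cf.\ the remark following it) is also valid, and is in fact closer in spirit to the toolkit the present paper develops. It trades one cited ingredient (the transport lemma for fibrations) for two others (that $\htpc$ is a congruence and that $\sigma q_A \htpc \id_Q$), all of which are standard in \cite{vdBergMoerdijk2018}; neither version is more self-contained than the other.
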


\begin{rem} \label{rem:strict}
\Cref{prop:strict} has the important consequence
that pullbacks in \fibcatmt along fibrations are homotopy pullbacks
and so are mapped to weak pullbacks in \hocatmt.
\end{rem}

\begin{defin} \label{def:hfulldiag}
Let $ f \colon A \to I $ and $ g \colon B \to A $ be two arrows in a path category \catct.
A diagram
\[
\xycenterm[C=3em@R=1.5em]{
B	\xys{}[dr]	&	V	\xys{}[l] \xys{}[d] \xys{}[r]
&	U	\xys{}[d]
\\
&	A	\xys{}[r]	&	I,	}
\]
that commutes up to homotopy
and where the square is a homotopy pullback, is \emph{homotopy full over $ f,g $}
if, for every such diagram over $ B \to A \to I $ commuting up to homotopy,
there are an arrow $ U' \to U $, a homotopy pullback $ V \toop P \to U' $ of $ V \to U \toop U' $
and an arrow $ P \to V' $ making the diagram below commute up to homotopy.
\[
\xycenterm[C=3em@R=1.5em]{
&	V'	\xys{}[dl] \xys{|!{[dl];[d]}\hole}@/_1.5em/[dd] \xys{}@/^1.5em/[rr]
&	P	\xydot{}[l] \xydot{}[dl] \xydot{}[r]
&	U'	\xydot{}[dl] \xys{}@/^1em/[ddl]
\\
B	\xys{}[dr]	&	V	\xys{}[l] \xys{}[d] \xys{}[r]
&	U	\xys{}[d]
&\\
&	A	\xys{}[r]	&	J	&}
\]
\end{defin}

\begin{rem} \label{rem:hfulldiag}
Let \catct be a path category.
Since \hocatct is the quotient of \catct by the homotopy relation
(cf.\ Theorem 2.16 in~\cite{vdBergMoerdijk2018}),
the image in \hocatct of a homotopy full diagram
over $ f,g $ is a full diagram over $ [f],[g] $.
\end{rem}

\begin{lem} \label{lem:wdepprodhfull}
Let \catct be a path category
and let $ f \colon A \fibr I $ and $ g \colon B \fibr A $ be two fibrations.
A homotopy weak dependent product of $ f $ and $ g $ is a homotopy full diagram over $ f,g $.
\end{lem}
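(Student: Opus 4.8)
The plan is to verify directly that the given homotopy weak dependent product satisfies the universal property demanded of a homotopy full diagram over $f,g$ in \cref{def:hfulldiag}. Write the homotopy weak dependent product as $u\colon U\fibr I$, $e\colon U\times_I A\to B$, with projections $\pi_U\colon U\times_I A\fibr U$ and $\pi_A\colon U\times_I A\fibr A$. Its square is an honest pullback along the fibration $f$, hence a homotopy pullback by \cref{rem:strict}, and $g e=\pi_A$, so this data does have the shape required by \cref{def:hfulldiag}. So suppose we are handed a competing diagram commuting up to homotopy: arrows $e'\colon V'\to B$, $a'\colon V'\to A$, $v'\colon V'\to U'$ and $u'\colon U'\to I$ with $f a'\htpc u'v'$ and $g e'\htpc a'$, where the square on $V'$ is a homotopy pullback of $U'\overset{u'}{\to}I\overset{f}{\leftarrow}A$. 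We must produce $h\colon U'\to U$, a homotopy pullback $P$ of $U\times_I A\overset{\pi_U}{\to}U\overset{h}{\leftarrow}U'$ with legs $q_V\colon P\to U\times_I A$ and $q_{U'}\colon P\to U'$, and $k\colon P\to V'$ fitting into the last diagram of \cref{def:hfulldiag} up to homotopy.

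First I would strictify. Since $f$ is a fibration, \cref{prop:strict} lets us replace $a'$ by a homotopic $\tilde a'$ with $f\tilde a'=u'v'$; the strictly commuting cone $(v',\tilde a')$ then induces an honest comparison arrow $t\colon V'\to U'\times_I A$ into the genuine pullback, and since the square on $V'$ is a homotopy pullback of $U'\overset{u'}{\to}I\overset{f}{\leftarrow}A$ (with $f$ already a fibration), this $t$ is a weak equivalence. In a path category weak equivalences are homotopy equivalences, so $t$ has a homotopy inverse $s\colon U'\times_I A\to V'$. Then $e's\colon U'\times_I A\to B$ satisfies $(e's)\,t\htpc e'$ by the congruence property of $\htpc$, and $g(e's)\htpc\tilde a's=\pi_A(ts)\htpc\pi_A$; applying \cref{prop:strict} to the fibration $g$ replaces $e's$ by $\bar e'\colon U'\times_I A\to B$ with $g\bar e'=\pi_A$ on the nose and $\bar e'\htpc e's$, whence $\bar e'\,t\htpc e'$.

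Next I would apply the universal property of the homotopy weak dependent product --- valid for $u'\colon U'\to I$ even though $u'$ need not be a fibration, by the remark after \cref{def:hwdepprod} --- to $u'$ and the section $\bar e'$ of $g$, obtaining $h\colon U'\to U$ with $u h=u'$ and $e\,(h\times A)\htpc_g\bar e'$, where $h\times A\colon U'\times_I A\to U\times_I A$ is the induced arrow. Then I would set $P:=V'$ with $q_{U'}:=v'$ and $q_V:=(h\times A)\,t$, and $k:=\id_{V'}$. Under the identification $U'\times_U(U\times_I A)\cong U'\times_I A$, the comparison arrow from the cone $(q_{U'},q_V)$ to this strict pullback of $U\times_I A\overset{\pi_U}{\to}U\overset{h}{\leftarrow}U'$ is exactly $t$; as $\pi_U$ is a fibration, that strict pullback is a homotopy pullback by \cref{rem:strict}, so $t$ being a weak equivalence exhibits $P$ as a homotopy pullback of $\pi_U$ and $h$. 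The remaining homotopy-commutativities are immediate: $u h=u'$, $\pi_U q_V=h v'=h q_{U'}$, $v'k=v'=q_{U'}$, and $e\,q_V=e\,(h\times A)\,t\htpc\bar e'\,t\htpc e'=e'k$, which is what \cref{def:hfulldiag} requires.

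The step I expect to be the main obstacle is the double strictification together with the attendant weak-equivalence argument: one must pass from homotopy-commuting to strictly-commuting data twice --- once to realize the comparison arrow $t$ as an honest map into a genuine pullback, so that it is visibly a weak equivalence by the very definition of homotopy pullback, and once to obtain an honest section $\bar e'$ of $g$ that can be fed into the strict input of \cref{def:hwdepprod} --- while carefully retaining enough homotopies to recover the weakly-commuting conclusion. Once the data is strict, the application of the universal property and the choice $P:=V'$, $k:=\id_{V'}$ reduce the rest to bookkeeping with the congruence property of $\htpc$.
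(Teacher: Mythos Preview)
Your argument is correct and follows the same overall strategy as the paper: strictify the homotopy-commutative input using \cref{prop:strict} to obtain an honest section of $g$ over $U'\times_I A$, then apply the universal property of the homotopy weak dependent product to produce $h\colon U'\to U$. The difference lies in the choice of $P$ and $k$. The paper takes $P \coloneqq U'\times_I A$ (which, under the canonical iso $U'\times_U(U\times_I A)\cong U'\times_I A$, \emph{is} the strict pullback of $\pi_U$ along $h$) and lets the arrow $P\to V'$ be the map $\psi\colon U'\times_I A\to V'$ supplied directly by the homotopy universal property of $V'$; the required section of $g$ is then obtained by strictifying $e'\psi$. You instead take $P\coloneqq V'$ and $k\coloneqq\id_{V'}$, which forces you to (i) build the comparison $t\colon V'\to U'\times_I A$, (ii) argue that $t$ is a weak equivalence, (iii) invoke that weak equivalences in a path category are homotopy equivalences to get a homotopy inverse $s$, and (iv) strictify $e's$. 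The paper's route is a touch more economical---it needs only the existence of $\psi$, not that the comparison map is a weak equivalence nor that weak equivalences admit homotopy inverses---while your route makes several of the closing verifications trivial since $k=\id$. Both are valid; your version leans on the characterisation of homotopy pullbacks via the comparison map to the strict pullback along a fibration being a weak equivalence, which is the standard one in this setting.
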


\begin{proof}
Let $ u \colon U \fibr I $, $ e \colon U \times_I A \to B $
be a homotopy weak dependent product of $ f $ and $ g $.
\Cref{rem:strict} implies that $ U \times_I A $ is a homotopy pullback.

Let now 
\[
\xycenterm[C=3em@R=1.5em]{
B	\xyfibr{}[dr]_g
&	V'	\xys{}[l]_{e'} \xys{}[d]^{v_2} \xys{}[r]^{v_1}
&	U'	\xys{}[d]^{u'}
\\
&	A	\xyfibr{}[r]_f	&	I,	}
\]
be commutative up to homotopy and such that the square is a homotopy pullback.
Hence there is an arrow $ \psi \colon U' \times_I A \to V' $
such that $ v_1 \psi \htpc \pi'_1 $ and $ v_2 \psi \htpc \pi'_2 $.
In particular, the diagram below commutes up to homotopy
\begin{equation}
\xycenterm{
U' \times_I A	\xys{}[dr]_{\pi'_2} \xys{}[r]^-{\psi}
&	V'	\xys{}[r]^{e'}	&	B	\xyfibr{}[dl]^g
\\
&	A	&}
\end{equation}
and \cref{prop:strict} implies that there is $ h \colon U' \times_I A \to B $ that makes the above triangle commute
and which is homotopic to $ e' \psi $.

The universal property of the homotopy weak dependent product then yields an arrow
$ k \colon U' \to U $
such that everything in the diagram below commutes strictly except for the two top-left triangles with common edge $ h $,
which only commute up to homotopy.

\[
\xycenterm[C=3em]{
&	V'	\xys{}[dl]_{e'}
&	U' \times_I A	\xys{}[l]_-\psi \xys{|-{\,h\,}}[dll] \xydot{}[dl] \xys{|!{[dl];[d]}\hole}@/^1em/[ddl] \xyfibr{}[r]
&	U'	\xydot{|-k}[dl] \xys{}@/^1em/[ddl]^-{u'}
\\
B	\xyfibr{}[dr]_-g
&	U \times_I A	\xys{}[l]^-e \xyfibr{}[d] \xyfibr{}[r]
&	U	\xyfibr{}[d]_-u
&\\
&	A	\xyfibr{}[r]_-f	&	I	&}
\]

Hence, as required, the square with two dotted sides above is a homotopy pullback and the diagram below commutes up to homotopy.

\[
\xycentermqed[C=4em]{
&	V'	\xys{}[dl]_{e'}
		\xys{|!{[dl];[d]}{\hole}}@/_2.5em/[dd]^(.2){v_2}
		\xys{}@/^1.5em/[rr]^-{v_1}
&	U' \times_I A	\xys{}[l]^(.6){\psi} \xys{}[dl] \xyfibr{}[r]
&	U'	\xys{|-{\,k\,}}[dl] \xys{}@/^1em/[ddl]^-{u'}
\\
B	\xyfibr{}[dr]_-g
&	U \times_I A	\xys{|-{\,e\,}}[l] \xyfibr{}[d] \xyfibr{}[r]
&	U	\xyfibr{}[d]_-u
&\\
&	A	\xyfibr{}[r]_-f	&	I	&}
\]
\end{proof}

\begin{theor} \label{thm:lccexh}
Let \catmt be a right proper model category where every object is cofibrant.
If \catmt has weak dependent products,
then \hocatmt has dependent full diagrams and,
in turn, $ \excomm{(\hocatm \catmm)} $ is locally cartesian closed.
\end{theor}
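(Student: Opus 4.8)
The plan is to deduce the statement from \cref{thm:lccex} applied to the category $ \hocatmm $: once we know that \hocatmt has dependent full diagrams, the local cartesian closure of $ \excomm{(\hocatm \catmm)} $ is immediate. Since every object of \catmt is cofibrant, \fibcatmt is a path category and \hocatmt is equivalent to the quotient of \fibcatmt by the homotopy relation; in particular \hocatmt has weak finite limits, so that the notion of dependent full diagram is meaningful in it. As the existence of dependent full diagrams is invariant under equivalence of categories, it suffices to exhibit, for every composable pair of arrows in the homotopy quotient of \fibcatmt, a dependent full diagram over it.

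The first step I would take is a reduction to composable fibrations. Given composable arrows $ [y] \colon Y \to X $ and $ [x] \colon X \to J $ represented by arrows $ y, x $ of \fibcatmt, I would factor $ x $ in \catmt as an acyclic cofibration followed by a fibration $ f \colon A' \fibr J $, and then factor the composite $ Y \to X \to A' $ as an acyclic cofibration followed by a fibration $ g \colon B' \fibr A' $. The objects $ A' $ and $ B' $ are again fibrant (being domains of fibrations over fibrant objects) and cofibrant, and, since acyclic cofibrations become isomorphisms in \hocatmt, the composable pair $ [g], [f] $ is isomorphic, as a composable pair of arrows of \hocatmt, to the pair $ [y], [x] $. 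As is straightforward to check much as in \cref{rem:fullobj}, dependent full diagrams are stable under isomorphism of the base, so a dependent full diagram over $ [f], [g] $ transports along this isomorphism to one over $ [x], [y] $; only the former then needs to be produced.

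Finally I would appeal to the homotopical results of this section: by the corollary above (which applies since \catmt is right proper, has all objects cofibrant, and has weak dependent products), \fibcatmt has a homotopy weak dependent product of the composable fibrations $ f $ and $ g $; by \cref{lem:wdepprodhfull} this is a homotopy full diagram over $ f, g $ in the path category \fibcatmt; and by \cref{rem:hfulldiag} its image in the homotopy quotient is a dependent full diagram over $ [f], [g] $. This shows that \hocatmt has dependent full diagrams, whence \cref{thm:lccex} delivers the conclusion. I expect the delicate point to be not any single deduction — each is a direct appeal to an already established result — but the reduction of the second paragraph: one must be careful that the homotopy quotient of \fibcatmt genuinely computes \hocatmt, that the two acyclic cofibrations can be absorbed into an honest isomorphism of composable pairs, and that dependent full diagrams transport along it; one should also keep in mind the background fact that \hocatmt has weak finite limits, without which the statement would not even be meaningful.
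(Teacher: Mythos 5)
Your proposal is correct and follows essentially the same route as the paper: produce homotopy weak dependent products over composable fibrations in $\fibcatmm$ (via the corollary on right proper model categories), turn them into homotopy full diagrams by \cref{lem:wdepprodhfull}, pass to $\hocatmm$ by \cref{rem:hfulldiag}, reduce the general case to fibrations by factoring arrows through weak equivalences (isomorphisms in $\hocatmm$), and conclude with \cref{thm:lccex}. Your second paragraph merely spells out the reduction step that the paper treats in one sentence, which is a fair elaboration rather than a different argument.
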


\begin{proof}
\Cref{lem:wdepprodhfull} and \cref{rem:hfulldiag} yield a full diagram over $ [f],[g] $
whenever $ f $ and $ g $ are both fibrations.
Since arrows in \catmt factor as weak equivalences and fibrations
and the former are isomorphisms in \hocatmt,
this is enough to conclude that \hocatmt has dependent full diagrams.
The last statement is an application of \cref{thm:lccex}.
\end{proof}

As an application of \cref{thm:lccexh},
consider the two standard model structures on the category of topological spaces
by Quillen~\cite{Quillen1967} and by Str\o{}m~\cite{Strom72},
which we denote by $ \cattopm_Q $ and $ \cattopm_S $, respectively.
The latter is right proper and every space is cofibrant.
Furthermore, Carboni and Rosolini showed
that \cattopt has weak dependent products~\cite{CarboniRosolini2000}.
Therefore $ \excomm{(\hocatm \cattopm_S)} $ not only is a pretopos,
as proved in~\cite{GranVitale1998},
but it is also locally cartesian closed.
This answers a question left open by Gran and Vitale in~\cite{GranVitale1998}.
In addition, although in $ \cattopm_Q $ the cofibrant objects are just the CW-complexes,
$ \cattopm_Q $ is Quillen equivalent to simplicial sets with the Quillen model structure.
This latter category does satisfy the hypothesis of our theorem,
therefore $ \excomm{(\hocatm \cattopm_Q)} $ is locally cartesian closed too.

\section*{Acknowledgements}
I am in debt with the late Erik Palmgren
for sharing with me his idea of using
the Fullness Axiom to obtain local cartesian closure.
The results in this paper were presented
at the 5th Workshop on Categorical Methods in Non-Abelian Algebra,
Louvain-la-Neuve, June 1-3 2017,
and at the International Category Theory Conference in Vancouver, July 16-22 2017.
I gratefully thank the organisers of both events for giving me the opportunity to speak.
This paper was partly written while I was visiting
the Hausdorff Research Institute for Mathematics in Bonn
during May 2018 in occasion of the trimester program Types, Sets and Constructions.
I thank the Institute for providing an excellent working environment.
Support from the Royal Swedish Academy of Sciences is also acknowledged.
Finally, I would like to express my gratitude to the anonymous referee for
useful comments on a previous draft of the paper.


\end{document}